\documentclass[11pt,leqno]{article}
\usepackage{amsmath, amscd, amsthm, amssymb, graphics, xypic, mathrsfs, setspace, fancyhdr, times, enumerate}
\entrymodifiers={+!!<0pt,\fontdimen22\textfont2>}
\DeclareFontFamily{OT1}{pzc}{}
\DeclareFontShape{OT1}{pzc}{m}{it}%
             {<-> s * [1.195] pzcmi7t}{}
\DeclareMathAlphabet{\mathscr}{OT1}{pzc}%
                                 {m}{it}
\usepackage[colorlinks=true,pagebackref=true]{hyperref} 
\hypersetup{backref}


\setlength{\textwidth}{6.0in}             
\setlength{\textheight}{8.25in}
\setlength{\topmargin}{-0.125in}
\setlength{\oddsidemargin}{0.25in}
\setlength{\evensidemargin}{0.25in}

\newcommand{\tensor}{\otimes}
\newcommand{\colim}{\operatorname{colim}}
\newcommand{\Spec}{\operatorname{Spec}}

\newcommand{\isomto}{{\stackrel{\sim}{\;\longrightarrow\;}}}
\newcommand{\isomt}{{\stackrel{{\scriptscriptstyle{\sim}}}{\;\rightarrow\;}}}

\newcommand{\sma}{{\scriptstyle{\wedge}}}

\renewcommand{\O}{{\mathcal O}}
\renewcommand{\hom}{\operatorname{Hom}}

\newcommand{\Z}{{\mathbb Z}}
\newcommand{\N}{{\mathbb N}}

\newcommand{\aone}{{\mathbb A}^1}
\newcommand{\pone}{{\mathbb P}^1}

\newcommand{\gm}[1]{{{\mathbf G}_{m}^{#1}}}
\renewcommand{\L}{{\mathcal L}}

\newcommand{\hop}[1]{\mathscr{H}_{\bullet}({#1})}

\newcommand{\bpi}{\boldsymbol{\pi}}

\newcommand{\Nis}{\operatorname{Nis}}

\newcommand{\Sm}{\mathscr{Sm}}

\newcommand{\Spc}{\mathscr{Spc}}

\newcommand{\Ab}{\mathscr{Ab}}

\newcommand{\K}{{{\mathbf K}}}

\newcommand{\hsnis}{\mathscr{H}_s^{\Nis}(k)}
\newcommand{\hspnis}{\mathscr{H}_{s,\bullet}^{\Nis}(k)}

\newcommand{\Addresses}{{
 \bigskip
 \footnotesize

 A.~Asok, Department of Mathematics, University of Southern California, 3620 S. Vermont Ave.,
  Los Angeles, CA 90089-2532, United States; \textit{E-mail address:} \url{asok@usc.edu}

  \medskip

 J.~Fasel, Institut Fourier - UMR 5582, Universit\'e Grenoble Alpes, 100, rue des math\'ematiques, F-38402 Saint Martin d'H\`eres; France \textit{E-mail address:} \url{jean.fasel@gmail.com}
}}

\newcounter{intro}
\setcounter{intro}{1}

\theoremstyle{plain}
\newtheorem{thm}{Theorem}[subsection]

\newtheorem{lem}[thm]{Lemma}
\newtheorem{cor}[thm]{Corollary}
\newtheorem{prop}[thm]{Proposition}
\newtheorem*{claim*}{Claim}  

\newtheorem*{thm*}{Theorem}
\newtheorem*{problem*}{Problem}

\newtheorem{thmintro}{Theorem}

\theoremstyle{definition}
\newtheorem{defn}[thm]{Definition}
\newtheorem{construction}[thm]{Construction}
\newtheorem{notation}[thm]{Notation}

\theoremstyle{remark}
\newtheorem{rem}[thm]{Remark}

\newtheorem{ex}[thm]{Example}

\numberwithin{equation}{section}

\begin{document}
\pagestyle{fancy}
\renewcommand{\sectionmark}[1]{\markright{\thesection\ #1}}
\fancyhead{}
\fancyhead[LO,R]{\bfseries\footnotesize\thepage}
\fancyhead[LE]{\bfseries\footnotesize\rightmark}
\fancyhead[RO]{\bfseries\footnotesize\rightmark}
\chead[]{}
\cfoot[]{}
\setlength{\headheight}{1cm}

\author{Aravind Asok\thanks{Aravind Asok was partially supported by National Science Foundation Awards DMS-0966589 and DMS-1254892.}\and Jean Fasel\thanks{Jean Fasel was partially supported by the DFG Grant SFB Transregio 45.}}

\title{{\bf Comparing Euler classes}}
\date{}
\maketitle

\begin{abstract}
We establish the equality of two definitions of an Euler class in algebraic geometry: the first definition is as a ``characteristic class" with values in Chow-Witt theory, while the second definition is as an ``obstruction class."  Along the way, we refine Morel's relative Hurewicz theorem in ${\mathbb A}^1$-homotopy theory, and show how to define (twisted) Chow-Witt groups for geometric classifying spaces.
\end{abstract}

\begin{footnotesize}
\tableofcontents
\end{footnotesize}
\newpage
\section{Introduction}
Suppose $k$ is a perfect field having characteristic unequal to $2$, $X$ is a $d$-dimensional smooth $k$-scheme, and $\xi: \mathcal{E} \to X$ is a rank $r$ vector bundle on $X$.  A choice of isomorphism $\theta: \det \xi \isomt \O_X$ is called an {\em orientation} of $\xi$.  There are (at least) two ways to define an ``Euler class" $e(\xi)$ of $\xi$ that provides an  obstruction to existence of a nowhere vanishing section of $\xi$; both definitions are simpler in case $\xi$ is oriented.  The goal of this note is to prove equivalence of these definitions, which we now recall.

Using the notation of \cite{FaselChowWitt}, one possible definition is as follows (this is the ``characteristic class" approach mentioned in the abstract).  If $s_0: X \to \mathcal{E}$ is the zero section of $\mathcal{E}$, then there are pullback and Gysin pushforward homomorphisms in Chow-Witt groups.  There is a canonical element $\langle 1 \rangle \in \widetilde{CH}^0(X)$, and we define
\[
e_{cw}(\xi) := (\xi^*)^{-1}(s_0)_* \langle 1 \rangle \in \widetilde{CH}^r(X,\operatorname{det}(\xi)^{\vee})
\]
(see \cite[Definition 13.2.1]{FaselChowWitt}; the definition there is equivalent to forming the product with this class using the ring structure in Chow-Witt groups of \cite{FaselChowWittRing} by the excess intersection formula of \cite{FaselExcessIntersection}).  This definition of Euler class is functorial for pullbacks and, modulo unwinding the definition of the Gysin pushforward, coincides with the definition of Euler class given in \cite[\S 2.1]{BargeMorel} for oriented vector bundles (\cite[Proposition 13.4.1]{FaselChowWitt}).

An alternative definition comes from \cite[Remark 8.15]{MField}, where one constructs an Euler class as {\em the} primary obstruction to existence of a non-vanishing section of $\xi$.  In that case, if $Gr_r$ denotes the infinite Grassmannian, and $\gamma_r$ is the universal rank $r$ vector bundle on $Gr_r$, the first non-trivial stage of the Moore-Postnikov factorization in $\aone$-homotopy theory of the map $Gr_{r-1} \to Gr_r$ gives rise to a canonical morphism $Gr_r \to K^{\gm{}}(\K^{MW}_r,r)$ (see \cite[Appendix B]{MField} for a discussion of twisted Eilenberg-Mac Lane spaces in our setting) yielding a canonical (equivariant) cohomology class
\[
o_r \in H^r_{\Nis}(Gr_r,\K^{MW}_r(\det \gamma_r^{\vee})).
\]
Given any smooth scheme $X$ and an $\aone$-homotopy class of maps $\xi: X \to Gr_r$ ``classifying" a vector bundle $\xi$ as above, the pullback of $o_r$ along $\xi$ yields a class
\[
e_{ob}(\xi) := \xi^*(o_r).
\]
This definition of Euler class is evidently functorial for pullbacks as well (note: with this definition $k$ is not required to have characteristic unequal to $2$).

In \cite{BargeMorel}, and subsequently in \cite{FaselChowWitt}, the (twisted) Chow-Witt groups are defined as follows.  If $\mathbf{I}^r$ denotes the unramified sheaf associated with the $r$-th power of the fundamental ideal in the Witt ring, there is a canonical morphism of sheaves $\K^M_r/2 \to \mathbf{I}^r/\mathbf{I}^{r+1}$ \cite[p. 78]{MMilnor}.  Using this morphism, define a sheaf by taking the fiber product of $\mathbf{I}^r$ and $\K^M_r$ over $\mathbf{I}^r/\mathbf{I}^{r+1}$ and the Chow-Witt groups as the $r$-th cohomology of the resulting fiber product sheaf.  By the universal property of the fiber product, there is a canonical morphism from $\K^{MW}_r$ to the fiber product sheaf, and this morphism induces an isomorphism $H^n(X,\K^{MW}_r) \isomt \widetilde{CH}^r(X)$ (see, e.g., \cite[Theorem 5.47]{MField}).  In fact, by the Milnor conjecture on quadratic forms, now a theorem \cite{Orlov07}, the morphism from $\K^{MW}_r$ to the fiber product sheaf is an isomorphism.  More generally, we will observe that these isomorphism can be suitably ``twisted by a local system" to yield an isomorphism
\begin{equation}\label{eqn:canonicalisom}
H^r(X,\K^{MW}_r(\det \xi^{\vee})) \longrightarrow \widetilde{CH}^r(X,\det \xi^{\vee})
\end{equation}
that will be used to identify these two groups; this isomorphism is decribed in detail in Theorem \ref{thm:identification1} but requires Proposition \ref{prop:gmaction}, which shows that the ``action" defining the local system housing the obstruction-theoretic Euler class coincides with that in the sheaf-theoretic definition of the twisted Chow-Witt groups.

Under the isomorphism of the previous paragraph, it makes sense to compare the two Euler classes described above.  In \cite[Remark 8.15]{MField}, Morel asserts that the two definitions of Euler class given above coincide, but provides no proof.  The main result of this paper provides justification for Morel's assertion, and can be viewed as an analog in algebraic geometry of \cite[Theorem 12.5]{MilnorStasheff}.  We will call a vector bundle $\xi: \mathcal{E} \to X$ on a smooth $k$-scheme $X$ {\em oriented} if there is a specified trivialization of the determinant.

\begin{thmintro}[See Theorem \ref{thm:comparison}]
\label{thmintro:comparison}
If $k$ is a perfect field having characteristic unequal to $2$, and if $\xi: \mathcal{E} \to X$ is an oriented rank $r$ vector bundle on a smooth $k$-scheme $X$, then, under the identifications described above,
\[
e_{ob}(\xi) = u e_{cw}(\xi),
\]
where $u \in GW(k)^{\times}$ is a unit.
\end{thmintro}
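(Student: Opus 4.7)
Both $e_{cw}$ and $e_{ob}$ are functorial for pullback of vector bundles, so it suffices to verify the identity $e_{cw}(\gamma_r) = o_r$ on (a finite-dimensional approximation of) the classifying space $\widetilde{Gr}_r$ of oriented rank $r$ bundles. Before this reduction is even meaningful one must make sense of $e_{cw}(\gamma_r)$ as a Chow--Witt class on the classifying space; this is exactly the ``(twisted) Chow--Witt groups for geometric classifying spaces'' construction promised in the abstract.

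\textbf{Following Milnor--Stasheff.} From here I would mimic \cite[Theorem 12.5]{MilnorStasheff}. Consider the stabilization $j : \widetilde{Gr}_{r-1} \hookrightarrow \widetilde{Gr}_r$, for which $j^*\gamma_r \cong \gamma_{r-1} \oplus \O$ admits a nowhere-vanishing section. By the very definition of $o_r$ as the primary obstruction to such a section, $j^* o_r = 0$. On the other hand $j^* e_{cw}(\gamma_r) = e_{cw}(\gamma_{r-1} \oplus \O)$ also vanishes, either by multiplicativity of $e_{cw}$ (using $e_{cw}(\O) = 0$) or, more directly, because $(s_0)_*\langle 1 \rangle$ can be computed by pushing forward along a section disjoint from the zero section. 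Both classes therefore lie in the kernel of
\[
j^* : H^r_{\Nis}(\widetilde{Gr}_r, \K^{MW}_r) \longrightarrow H^r_{\Nis}(\widetilde{Gr}_{r-1}, \K^{MW}_r).
\]

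\textbf{Pinning down the kernel.} Using the paper's refined relative $\aone$-Hurewicz theorem, the first non-vanishing relative $\aone$-homotopy sheaf of the pair $(\widetilde{Gr}_r, \widetilde{Gr}_{r-1})$ should be identified with $\K^{MW}_r$. Combined with the Moore--Postnikov analysis that was used to construct $o_r$ in the first place, this identifies the kernel of $j^*$ with a group of homotopy classes of maps into the relevant (twisted) Eilenberg--Mac Lane space; under this identification $o_r$ tautologically corresponds to the identity. It remains to show that $e_{cw}(\gamma_r)$ corresponds to the same element.

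\textbf{Main obstacle.} This last matching is the technical heart of the proof. I expect it to reduce to an explicit comparison on a test bundle --- for instance an oriented rank $r$ bundle on $\A^r$ with a section vanishing transversely at the origin, or on a small smooth variety where both sides can be written as a local index. On the Chow--Witt side, $(s_0)_*\langle 1 \rangle$ becomes the local Grothendieck--Witt index of the section in a trivialization; on the obstruction-theoretic side, the Hurewicz image of $o_r$ is read off the Moore--Postnikov tower. Matching these two local quantities, while correctly tracking the orientation $\theta$, the sign conventions in the Gysin pushforward, and the twist by $\det \gamma_r^{\vee}$ appearing in the ambient (non-oriented) version of the relative Hurewicz theorem, is the step I expect to be the most delicate.
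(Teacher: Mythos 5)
Your reduction to the universal case, the choice of $BSL_r$ (your $\widetilde{Gr}_r$), and the idea of locating both classes inside the kernel of $j^*$ and identifying that kernel via the refined relative $\aone$-Hurewicz theorem all match the paper's strategy. But your ``Main obstacle'' paragraph is exactly where the paper actually does something, and you leave it as a speculation about a local index computation on a test bundle. As written, that is a genuine gap: showing both classes vanish on $\widetilde{Gr}_{r-1}$ only places them in the same cyclic $\K^{MW}_0(k)$-module; you still need to pin down the coefficient, and pulling back to a test variety does not do that unless you also know that pullback is injective on the relevant kernel. You flag the difficulty but do not resolve it.

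The paper closes the gap conceptually, without any local index. The point is that \emph{both} classes factor through the single group $H^n(Th(\gamma_n),\K^{MW}_n)$ in the cofiber sequence $BSL_{n-1}\to BSL_n\to Th(\gamma_n)$. By the relative Hurewicz theorem, $Th(\gamma_n)$ is $\aone$-$(n-1)$-connected with $\bpi_n^{\aone}(Th(\gamma_n))\cong\K^{MW}_n$, so $H^n(Th(\gamma_n),\K^{MW}_n)\cong\hom(\K^{MW}_n,\K^{MW}_n)\cong\K^{MW}_0(k)$, generated by the Thom/orientation class. On the one hand, the Chow--Witt Euler class $e_{cw}(\gamma_n)=(\xi^*)^{-1}(s_0)_*\langle 1\rangle$ is, by purity, precisely the image of this generator under the map $H^n(Th(\gamma_n),\K^{MW}_n)\to H^n(BSL_n,\K^{MW}_n)$ induced by $BSL_n\to Th(\gamma_n)$. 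On the other hand, the transgression description of the $k$-invariant (Section 4 of the paper, especially Example \ref{ex:kinvariant} and \ref{ex:universalclass}) says $o_n$ is the image of the fundamental class of the fiber ${\mathbb A}^n\setminus 0$ under $H^{n-1}({\mathbb A}^n\setminus 0,\K^{MW}_n)\cong H^n(Th(\gamma_n),\K^{MW}_n)\to H^n(BSL_n,\K^{MW}_n)$, i.e.\ the image of the \emph{same} generator along the \emph{same} map. So the two classes agree tautologically once both are expressed through the Thom space; there is no orientation bookkeeping or sign-chase on a test bundle to do. If you want to repair your write-up, replace the ``Main obstacle'' paragraph with this identification: the Gysin pushforward along the zero section is the Thom isomorphism, and the $k$-invariant is the transgression of the fundamental class, which under relative Hurewicz is again the Thom class.
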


The method of proof we propose is classical and is likely the one envisaged by Morel: we establish this result by the method of the universal example.  Nevertheless, we felt it useful to provide a complete proof of the above result for at least three other reasons.  First, as is perhaps evident from the length of this note, a fair amount of effort is required to develop the technology necessary for comparing the various constructions appearing in the definitions of Euler classes; furthermore, some of the results established here will be used elsewhere (e.g., the Blakers-Massey theorem is used in \cite{WickelgrenWilliams}).  Second, as Morel observes in \cite[Remark 8.15]{MField}, in combination with his $\aone$-homotopy classification of vector bundles and the theory of the Euler class \cite[Theorems 8.1 and 8.14]{MField}, the above result completes the verification of the main conjecture of \cite{BargeMorel}.  Third, the results of this paper are already used in \cite[Lemma 3.3]{AsokFaselSpheres} and therefore implicitly in \cite{AsokFaselThreefolds} and \cite{AsokFaselA3minus0}.  Finally, we observe that the Chow-Witt Euler class is much more straightforward to explicitly compute than the obstruction theoretic Euler class.


\subsubsection*{Overview of contents}
Section \ref{s:chowwitt} is devoted to establishing functoriality properties and various different models of Chow-Witt groups, together with some technical results about Chow-Witt groups for which we could not find a good reference.  In particular, this section includes a construction of the canonical isomorphism mentioned in the introduction. Section \ref{s:eulerthom} is devoted to extending the definition of Chow-Witt groups to classifying spaces (including the infinite Grassmannian), and for reviewing various things related to the obstruction theoretic Euler class.  Section \ref{s:relativehurewicz}, establishes a Blakers-Massey theorem in $\aone$-homotopy theory, which we believe, as remarked above, has independent value.  The results of Section \ref{s:relativehurewicz} are independent of the first two sections, but are integral to identifying the obstruction theoretic Euler class as a ``transgressive class"; this notion, motivated by the classical Serre spectral sequence, is explained in more detail in Section \ref{s:transgression}.  Finally, Section \ref{s:transgression} also contains the proof of the main Theorem, and uses all the preceding results.  More detailed descriptions of the contents and results are provided at the beginning of each section.

\subsubsection*{Acknowledgements}
The first author would like to thank Jeff Giansiracusa for discussions about the relationship between the transgression of the fundamental class and the Euler class in classical algebraic topology.  We also thank the referee for a number of suggestions to improve the clarity of the presentation.

\subsubsection*{Notation and preliminaries}
Our notation regarding $\aone$-homotopy theory, fiber sequences, etc., follows the conventions we laid out in \cite[\S 2]{AsokFaselSpheres}.  We have made a significant effort to keep this work as self-contained as possible, but as is probably clear from the technical nature of the theorem statement, this was largely impossible.  When discussing $\aone$-homotopy theory, we have used techniques and ideas from \cite{MV} and \cite{MField} rather freely (hopefully with sufficiently precise references that the interested reader can follow).  When discussing Chow-Witt groups, we have used the results of \cite{FaselChowWitt} rather freely.  Since we expect this paper will be read in conjunction with one of \cite{AsokFaselSpheres, AsokFaselThreefolds} or \cite{AsokFaselA3minus0} we have attempted to keep terminology consistent with those papers.

Any time we refer to the results of \cite{MField}, the reader should understand that $k$ is assumed perfect.  For the most part, we follow the notation of \cite{FaselChowWitt} for Chow-Witt theory.  Any time we refer to the results of Chow-Witt theory, the reader should understand that $k$ has characteristic unequal to $2$.   Thus, for simplicity, the reader can assume throughout that $k$ is perfect and has characteristic unequal to $2$ everywhere.  (The especially wary reader may also want to assume that $k$ is infinite for technical reasons as explained in \cite[\S 1]{AsokFaselSpheres}).

\section{Chow-Witt groups revisited}
\label{s:chowwitt}
In this section, we revisit the definition of Chow-Witt groups.  Subsection \ref{ss:chowwittfunctoriality} is devoted to establishing functoriality for pullbacks of arbitrary morphisms of smooth schemes, which is only implicit in previous work.  Then, we pass to the sheaf-theoretic approach to Chow-Witt groups.  Subsection \ref{ss:contractionsandactions} is devoted to studying twists of strictly $\aone$-invariant sheaves, a language that is necessary for comparison with Chow-Witt groups twisted by a line bundle.  Subsection \ref{ss:twistedchowwittgroups} establishes the main comparison result (i.e., Theorem \ref{thm:identification1}).  In particular, we observe that a sheaf-theoretic definition of Chow-Witt groups studied by Morel can be {\em functorially} identified with the version studied in \cite{FaselChowWitt}.  Finally Subsection \ref{ss:furtherproperties} establishes some further technical results about Chow-Witt groups that will be useful in the remainder of the paper.

\subsection{Functoriality of (twisted) Chow-Witt groups revisited}
\label{ss:chowwittfunctoriality}
In \cite[Definition 7.1]{FaselChowWittRing}, the second author gave a definition of a pullback in (twisted) Chow-Witt for an arbitrary morphism $f: X \to Y$ of smooth schemes; changing the notation slightly we will write $f^*$ for this pullback (rather than $f^!$).  More precisely, if $f$ is as above, and $\mathcal{L}$ is a line bundle on $Y$, then there is a pullback morphism
\[
f^*: \widetilde{CH}^i(Y,\mathcal{L}) \longrightarrow \widetilde{CH}^i(X,f^*{\mathcal L}).
\]
Recall that we factor $f$ as the composition of the graph map $\Gamma_f: X \to X \times Y$, which is a regular embedding, and the projection $p_Y: X \times Y \to Y$, which is a smooth morphism and the morphism $f^*$ is defined as the composite $\Gamma_f^* p_Y^*$ (we use \cite[Remark 5.6]{FaselChowWittRing} to define the morphism in the twisted case for a regular embedding).  Unfortunately, the functoriality properties of this construction are only implicit in \cite{FaselChowWittRing} so for the sake of completeness, we spell out the proofs here.

Recall from \cite[Proposition 7.4]{FaselChowWittRing} that in case $f$ is flat, the morphism $f^*$ described above coincides with the usual pullback for Chow-Witt groups as studied in \cite[Corollaire 10.4.3]{FaselChowWitt}.  In particular, the pullback on Chow-Witt groups is functorial for smooth morphisms; we use this observation repeatedly in the sequel.  In each statement below, the line bundles are suppressed from notation for convenience.

\begin{lem}
\label{lem:tworegularembeddings}
Given a diagram of the form
\[
X \stackrel{i}{\longrightarrow} Y \stackrel{p}{\longrightarrow} Z,
\]
where $i$ is a regular embedding of codimension $d$, $p$ is a smooth morphism of relative dimension $n$ and $pi$ is a regular embedding of codimension $d-n$, the equality $(pi)^* = i^*p^*$ holds.
\end{lem}

\begin{proof}
Form the fiber product diagram
\[
\xymatrix{
X \times_Z Y \ar[r]^-j\ar[d]_-{p'} & Y \ar[d]^-p \\
X \ar[r]_-{pi}\ar[ur]^i & Z.
}
\]
Since $p$ is smooth, it follows that $p'$ is smooth as well.  Moreover, the morphism $i$ determines a section $s: X \to X \times_Z Y$.  Since $p'$ is smooth by \cite[B.7.5]{Fulton} or \cite[Expose VIII 1.3]{SGA6} it follows that $s$ is a regular embedding.  Also, $j$ is a regular embedding.  Since $i = js$, it follows from \cite[Theorem 5.11]{FaselChowWittRing} that $i^* = s^*j^*$.  Therefore, $i^*p^* = s^*j^*p^*$.  Now, by \cite[Lemma 5.7]{FaselChowWittRing}, we know that $j^*p^* = (p')^*(pi)^*$, so we conclude that $s^*j^*p^* = s^*(p')^*(pi)^*$.  Now, since $s$ is a section, $p's = id_X$.  Applying \cite[Lemma 5.10]{FaselChowWittRing} we conclude that $id = (p's)^* = s^*(p')^*$, which concludes the proof.
\end{proof}

Next, we establish that we can use any factorization of a morphism $f: X \to Y$ of smooth schemes as the composite of a regular embedding followed by a smooth morphism to define the pullback.

\begin{lem}
\label{lem:independenceoffactorization}
If $f: X \to Y$ is a morphism of smooth schemes, and $X \stackrel{i_1}{\to} P_1 \stackrel{p_1}{\to} Y$ and $X \stackrel{i_2}{\to} P_2 \stackrel{p_2}{\to} Y$ are two factorizations of $f$ as the composition of a regular embedding followed by a smooth morphism, then $i_1^*p_1^* = i_2^*p_2^*$.
\end{lem}

\begin{proof}
Consider the fiber product $P_1 \times_Y P_2$ and let $p_1'$ and $p_2'$ be the induced maps from the fiber product to $P_1$ and $P_2$.  There is a diagonal map $X \stackrel{(i_1,i_2)}{\to} P_1 \times_Y P_2$, which is again a regular embedding.  Now, Lemma \ref{lem:tworegularembeddings} allows us to conclude that $(i_1,i_2)^*(p_1^\prime)^* = i_1^*$ and $(i_1,i_2)^*(p_2^\prime)^* = i_2^*$.  We conclude using the equality $(p_1')^*p_1^* = (p_2')^*p_2^*$ given by the usual functoriality for pullbacks with respect to smooth morphisms \cite[Corollaire 10.4.3]{FaselChowWitt}.
\end{proof}

Finally, we can establish functoriality of pullbacks; we will use this result repeatedly in the sequel without explicit mention.

\begin{thm}
If $f: X \to Y$ and $g: Y \to Z$ are morphisms of smooth schemes, then $(gf)^* = f^*g^*$.
\end{thm}

\begin{proof}
Contemplate the diagram
\[
\xymatrix{
X \ar[r]^-{\Gamma_f}\ar[dr]_-f & X \times Y \ar[r]^-{\Gamma_g'}\ar[d]^-{p_Y} & X \times Y \times Z \ar[d]^-{p_{Y,Z}} \\
 & Y \ar[dr]_-g\ar[r]^-{\Gamma_g} & Y \times Z \ar[d]^-{p_Z} \\
  & & Z
}
\]
where $\Gamma_g'$ is the pullback of $\Gamma_g$ along $p_{Y,Z}$.  All vertical morphisms are smooth and all horizontal morphisms are regular embeddings.  This diagram is commutative by construction.

The pullbacks in the only square appearing in the above diagram commute by \cite[Lemma 5.7]{FaselChowWittRing} since $p_{Y,Z}$ and $p_Y$ are both smooth and $\Gamma_g$ and $\Gamma_g'$ are regular embeddings.  Now, we just observe that by Lemma \ref{lem:independenceoffactorization} we can use the factorization of $gf: X \to Z$ as $\Gamma_g'\Gamma_f: X \to X \times Y \times Z$ and $p_Zp_{Y,Z}: X \times Y \times Z \to X$ to compute $(gf)^*$.
\end{proof}

\subsection{Contractions and actions}
\label{ss:contractionsandactions}
\begin{defn}
\label{defn:strictlyaoneinvariant}
Recall that a sheaf $\mathbf{A}$ of abelian groups on $\Sm_k$ is {\em strictly $\aone$-invariant} if its cohomology presheaves $U \mapsto H^i(U,\mathbf{A})$ are $\aone$-invariant, i.e., the maps $U \times \aone \to U$ induce bijections $H^i(U,\mathbf{A}) \to H^i(U \times \aone,\mathbf{A})$ for every integer $i \geq 0$ and every $U \in \Sm_k$.
\end{defn}

Suppose $U$ is a smooth $k$-scheme.  If $\mathbf{A}$ is a strictly $\aone$-invariant sheaf, the contraction ${\mathbf A}_{-1}$ is defined sectionwise using the cokernel of the pullback along the projection map $U \times \gm{} \to U$:
\[
{\mathbf A}_{-1}(U) := \operatorname{coker}(\mathbf{A}(U) \longrightarrow \mathbf{A}(U \times \gm{})).
\]
This cokernel is again a strictly $\aone$-invariant sheaf, and we inductively define $\mathbf{A}_{-n}$ by the formula $\mathbf{A}_{-n} := (\mathbf{A}_{-n+1})_{-1}$.  If $\mathbf{A} = \mathbf{B}_{-1}$ for some strictly $\aone$-invariant sheaf $\mathbf{B}$, we will say that $\mathbf{A}$ is a contraction.

If $\mathbf{A}$ is a strictly $\aone$-invariant sheaf, then we can define a morphism $\gm {} \times \mathbf{A}_{-1} \to \mathbf{A}$ as follows.  If $U$ is a smooth $k$-scheme, take an element $a \in \gm{}(U) = \O_U(U)^{\times}$ and view this element as a morphism $U \to \gm{}$.  The composite map
\[
U \stackrel{\Delta}{\longrightarrow} U \times U \stackrel{a \times id}{\longrightarrow} \gm{} \times U
\]
then induces a pullback map
\[
ev_{a}: \mathbf{A}(\gm{} \times U) \longrightarrow \mathbf{A}(U).
\]
The difference $ev_{a} - ev_1$ is, by construction, trivial on any element in the image of the pullback $\mathbf{A}(U) \to \mathbf{A}(U \times \gm{})$ and therefore induces a map
\[
a \cup : \mathbf{A}_{-1}(U) \longrightarrow \mathbf{A}(U).
\]
There is thus an induced map
\[
\gm{}(U) \times \mathbf{A}_{-1}(U) \longrightarrow \mathbf{A}(U),
\]
which is functorial in $U$ by construction.  The map just mentioned extends to a bilinear map
\[
\K^{MW}_1 \times \mathbf{A}_{-1} \longrightarrow \mathbf{A}
\]
by \cite[Lemma 3.48]{MField}.

If $E/k$ is a field extension and $a \in E^{\times}$ we can consider the expression $\langle a \rangle := 1 + \eta a \in \K^{MW}_0(E)$.  If $U$ is a smooth scheme, sending $a \in k(U)$ to $\langle a \rangle$ defines a homomorphism
\[
\gm{}(k(U)) \to \K^{MW}_0(k(U))^{\times}.
\]
Since the restriction maps $\gm{}(U) \to \gm{}(k(U))$ and $\K^{MW}_0(U) \to \K^{MW}_0(k(U))$ are injective (by construction in the latter case), there is an induced morphism of sheaves $\gm{} \to (\K^{MW}_0)^{\times}$.  This homomorphism extends to a morphism of sheaves of rings
\[
\Z[\gm{}] \longrightarrow \K^{MW}_0.
\]

If $\mathbf{A}$ is a strictly $\aone$-invariant sheaf, the sheaf $\mathbf{A}_{-1}$ admits an action by $\gm{}$ defined as follows.  If $a \in \O^\times_U(U)$, as above we view this as a map $a: U \to \gm{}$ and consider the composite map
\[
\xymatrix@C=4.7em{\gm{} \times U\ar[r]^-{id_{\gm{}}\times (a,Id_U)}  &  \gm{} \times \gm{} \times U \ar[r]^-{m \times id_U} &  \gm{} \times U},
\]
%
where $m$ is the multiplication morphism on $\gm{}$.  The resulting composite, which we shall denote $\tilde{a}$, is an isomorphism with inverse given using the the inverse of the unit $a$.  If we consider $\tilde{a}^*: \mathbf{A}(\gm{} \times U) \to \mathbf{A}(\gm{} \times U)$, then the assignment $a \mapsto \tilde{a}^*$ defines an action of $\gm{}(U)$ on $\mathbf{A}(\gm{} \times U)$.  If we consider the trivial action of $\gm{}(U)$ on $\mathbf{A}(U)$, then the map $\mathbf{A}(U) \to \mathbf{A}(\gm{} \times U)$ induced by pullback along the projection is $\gm{}(U)$-equivariant and there is thus an induced action map $\gm{}(U) \times \mathbf{A}_{-1}(U) \to \mathbf{A}_{-1}(U)$.  This construction is clearly functorial in $U$ and there is thus an action map
\[
\gm{} \times \mathbf{A}_{-1} \to \mathbf{A}_{-1}.
\]
By \cite[Lemma 3.49]{MField}, the action of $\gm{}$ on $\mathbf{A}_{-1}$ just described factors through an action of $\K^{MW}_0$ on $\mathbf{A}_{-1}$ by means of the morphism $\Z[\gm{}] \to \K^{MW}_0$-described in the previous paragraph.

\begin{rem}
\label{rem:kmwgmaction}
If $\mathbf{A} = \K^{MW}_n$, then ${\mathbf A} = (\K^{MW}_{n+1})_{-1}$ and the action map $\K^{MW}_0 \times \K^{MW}_n \to \K^{MW}_n$ coincides with the standard action of $\K^{MW}_0$ on $\K^{MW}_n$.
\end{rem}

\begin{defn}
\label{defn:twistedsheaves}
Suppose $X$ is a smooth $k$-scheme, and $\L$ is an invertible sheaf on $X$.  Write $L$ for the associated line bundle on $X$ and $L^{\circ}$ for the $\gm{}$-torsor on $X$ underlying $L$, i.e., the complement of the zero section in $L$.  If $\mathbf{A}$ is a strictly $\aone$-invariant sheaf carrying an action of $\gm{}$, set
\[
\mathbf{A}(\L) := \Z[L^{\circ}] \tensor_{\Z[\gm{}]} \mathbf{A}.
\]
\end{defn}

\begin{construction}[Pullbacks]
\label{construction:pullbacksfortwistedsheaves}
Suppose $f: X \to Y$ is a morphism of smooth schemes, and $\mathcal{L}$ is a line bundle on $Y$.  In that case, we can consider the line bundle $f^*{\mathcal L}$ on $X$.  If, as above, $L$ is the geometric vector bundle associated with ${\mathcal L}$, and $f^*L$ is the geometric vector bundle on $X$ associated with $f^*{\mathcal L}$, then we can identify $f^*L$ with the fiber product scheme $X \times_Y L$.  Moreover, there is an induced morphism $f^*L^{\circ} \to L^\circ$.  If $\mathbf{A}$ is as in Definition \ref{defn:strictlyaoneinvariant}, then there is an induced map $f^* {\mathbf A}({\mathcal L}) \to {\mathbf A}(f^*{\mathcal L})$.  As a consequence, there is, for any integer $i \geq 0$, an induced pullback map
\[
f^*: H^i(Y,{\mathbf A}({\mathcal L})) \longrightarrow H^i(X,\mathbf{A}(f^*{\mathcal L})).
\]
\end{construction}


\subsection{The sheaf-theoretic approach to twisted Chow-Witt groups}
\label{ss:twistedchowwittgroups}
If $\mathbf{A}$ is a contraction, then by \cite[Remarks 5.13-14]{MField}, the twisted sheaf $\mathbf{A}(\L)$, viewed as a sheaf on the small Zariski site of $X$, admits a (``twisted") Gersten resolution.  More precisely, if $F$ is a field, and if $\Lambda$ is a $1$-dimensional $F$-vector space, set $\mathbf{A}(F;\Lambda) := \mathbf{A}(F) \tensor_{\Z[F^{\times}]} \Z[\Lambda \setminus 0]$.  The sheaf $\mathbf{A}(\L)$ admits a flasque resolution by a complex whose underlying graded abelian group is of the form:
\[
C^*(X,\L,\mathbf{A}) := \bigoplus_{x \in X^{(n)}} \mathbf{A}_{-n}(\kappa_x; \Lambda^n_{\kappa_x} (\mathfrak{m}_x/\mathfrak{m}_x^2)^\vee \tensor \L_x);
\]
with differential defined on \cite[p. 122]{MField}.

\begin{prop}
\label{prop:gerstencomplexescoincide}
If $X$ is a smooth scheme, $\L$ is a line bundle on $X$, and $\mathbf{A} = \K^{MW}_r$ (with the $\gm{}$-action described in Remark \ref{rem:kmwgmaction}), then the complex $C^*(X,\L,\K^{MW}_r)$ coincides with the complex of \textup{\cite[D{\'e}finition 10.2.10]{FaselChowWitt}}.
\end{prop}

\begin{proof}
By the Milnor conjecture on quadratic forms \cite{Orlov07}, there are canonical identifications
\[
\K^{MW}_n \isomto \K^M_n \times_{\mathbf{I}^n/\mathbf{I}^{n+1}} \mathbf{I}^n.
\]

The fiber product complex of \cite[D{\'e}finition 10.2.10]{FaselChowWitt} (in the untwisted case) is precisely the Gersten resolution of the fiber product sheaf $\K^M_n \times_{\K^M_n/2} \mathbf{I}^n$ and the isomorphism of sheaves in the previous paragraph, together with compatibility of that isomorphism with contractions shows that this complex coincides with the Gersten resolution of $\K^{MW}_r$.  It remains to establish that this isomorphism is compatible with twists as well, but we leave this to the reader.
\end{proof}

\subsubsection*{Identification of pullbacks}
Suppose $Y$ is a smooth $k$-scheme, and $u\in \O_Y(Y)$ is a regular function with vanishing locus $D$, a smooth closed subscheme with open complement $U$. Write $\kappa:D\to Y$ for this closed embedding and $i:U\to Y$ for the corresponding open embedding. For any point $x\in U$ and any $m\in\Z$, there is a multiplication by $u$ homomorphism
\[
\mathbf{K}_m^{MW}(k(x))\to \mathbf{K}_{m+1}^{MW}(k(x))
\]
defined by $\alpha\mapsto [u]\cdot \alpha$.  By \cite[Proposition 3.17]{MField} this homomorphism commutes, up to multiplication by $\epsilon=-\langle -1\rangle$, with differentials in the Gersten complex on $U$.

\begin{rem}
\label{rem:suppressingtwists}
Analogously, multiplication by $u$ defines a corresponding homomorphism for Milnor-Witt $K$-theory twisted by local orientations (in the sense of Definition \ref{defn:twistedsheaves}).  For the sake of unburdening our already suffering notation, we have, in this section, suppressed all line bundle twists.  The arguments given below extend with only notational changes to the twisted setting.
\end{rem}

If $n\in\N$ and $x\in U^{(n)}$, we have a residue homomorphism
\[
d:\mathbf{K}_{m+1}^{MW}(k(x),\Lambda^n_{\kappa_x} (\mathfrak{m}_x/\mathfrak{m}_x^2)^\vee)\to \bigoplus_{x\in X^{(n+1)}}\mathbf{K}_{m}^{MW}(k(x),\Lambda^n_{\kappa_x} (\mathfrak{m}_x/\mathfrak{m}_x^2)^\vee)
\]
We can split the right-hand term as the direct sum over points in $X^{(n+1)}\cap D$ and the direct sum over points in $U^{(n+1)}$. Consequently, we get homomorphisms of abelian groups
\[
C^n(U,\K_m^{MW})\to C^{n+1}_D(X,\K_m^{MW})\oplus C^{n+1}(U,\K_m^{MW}).
\]
We write $\partial_D$ for the first component of this homomorphism and we observe that the second component is just the differential $d_U$ of the Gersten complex on $U$.

\begin{lem}
\label{lem:commutativity}
The diagram
\[
\xymatrix{C^n(U,\K_m^{MW})\ar[r]^-{\partial_D}\ar[d]_-{d_U} & C^{n+1}_D(X,\K_m^{MW})\ar[d]^-{d_D} \\
C^{n+1}(U,\K_m^{MW})\ar[r]_-{\partial_D} & C^{n+2}_D(X,\K_m^{MW})}
\]
anti-commutes.
\end{lem}

\begin{proof}
If $\alpha\in C^n(U,\K_m^{MW})$, then we may view it as an element of $C^n(X,\K_m^{MW})$.  The following equality holds:
\[
0=(d\circ d)(\alpha)=d(\partial_D(\alpha)+d_U(\alpha))=d\circ\partial_D(\alpha)+d\circ d_U(\alpha).
\]
Now $\partial_D(\alpha)$ is supported on $D$ and therefore $d\circ\partial_D(\alpha)=d_D\circ\partial_D(\alpha)$. On the other hand, we have $d\circ d_U(\alpha)=\partial_D\circ d_U(\alpha)+d_U\circ d_U(\alpha)=\partial_D\circ d_U(\alpha)$ since $d_U\circ d_U=0$. The claim follows.
\end{proof}

Gathering the above constructions, we obtain a diagram of the form:
\[
\xymatrix{C^n(U,\K_{m}^{MW})\ar[r]^-{[u]}\ar[d]_-{d_U} & C^n(U,\K_{m+1}^{MW})\ar[r]^-{\partial_D}\ar[d]_-{d_U} & C^{n+1}_D(X,\K_{m+1}^{MW})\ar[d]^-{d_D}\ar[r]^-{(\kappa_*)^{-1}} & C^n(D,\K_m^{MW})\ar[d]^-{d_D} \\
C^{n+1}(U,\K_{m}^{MW})\ar[r]_-{[u]} & C^{n+1}(U,\K_{m+1}^{MW})\ar[r]_-{\partial_D} & C^{n+2}_D(X,\K_{m+1}^{MW})\ar[r]_-{(\kappa_*)^{-1}} &  C^{n+1}(D,\K_m^{MW})}.
\]
The right-hand horizontal maps are the inverses of the push-forward isomorphisms $\kappa_*:C^n(D,\K_m^{MW})\to C^{n+1}_D(X,\K_{m+1}^{MW})$.  Regarding the squares in this diagram: the inner square anti-commutes by Lemma \ref{lem:commutativity}, the left square commutes up to multiplication by $- \langle -1 \rangle$ as mentioned before Remark \ref{rem:suppressingtwists} and the right hand square commutes.  Therefore, the outer square commutes up to multiplication by $\langle -1 \rangle$.

Definining $\delta_n$ to be the composite $\langle (-1)^n\rangle (\kappa_*)^{-1}\circ \partial_D \circ [u]$, we obtain a morphism of complexes
\[
\delta:C^*(U,\K_m^{MW})\to C^*(D,\K_m^{MW})
\]
that induces, upon taking cohomology, the homomorphism of the bottom line in the diagram above \cite[Definition 5.5]{FaselChowWittRing}.

The pull-back along closed immersions for Chow-Witt groups involves deformation to the normal cone, $\delta$, homotopy invariance and the pull-back along smooth morphisms.  To show that this pull-back coincides with the sheaf-theoretic pullback, it suffices to show that the morphism $\delta$ induces a morphism of flasque resolutions that reduces to the pull-back on the Nisnevich sheaf $\K_m^{MW}$ at the level of $H^0$; this follows by definition of the sheaf itself.  For convenient reference, we summarize these observations in the following result.

\begin{thm}
\label{thm:identification1}
The twisted Chow-Witt group $\widetilde{CH}^i(X,\L)$ as defined in \textup{\cite[D{\'e}finition 10.2.16]{FaselChowWitt}} coincides with $H^i(X,\K^{MW}_i(\L))$ via the identification of \textup{Proposition \ref{prop:gerstencomplexescoincide}}.  Under this identification, the pullback of \textup{Construction \ref{construction:pullbacksfortwistedsheaves}} coincides with the pullback for Chow-Witt groups (see \textup{\S \ref{ss:chowwittfunctoriality}}).
\end{thm}

\subsection{Further properties}
\label{ss:furtherproperties}
In this section, we establish two technical properties: a base-change result, which appears as Theorem \ref{thm:basechange} below and an excision result, which appears as Lemma \ref{lem:stabilization}.

\subsubsection*{Base-change}
\begin{thm}
\label{thm:basechange}
Suppose $f: X \to Y$ is a regular embedding of smooth schemes fitting into a Cartesian square of smooth schemes of the form
\[
\xymatrix{
X' \ar[r]^-{v}\ar[d]_-g & X \ar[d]^-f \\
Y' \ar[r]_-u & Y.
}
\]
Suppose that the diagram is transversal in the sense that the natural morphism $N_{X'}Y'\to v^*N_XY$ is an isomorphism. Then $u^* f_* ( - ) = g_*v^* ( - )$.
\end{thm}

\begin{proof}
As usual, we omit the potential line bundles in the formula for the sake of conciseness. We follow the arguments of \cite[Theorem 32]{FaselExcessIntersection}. Recall first from \cite[\S 2.6]{FaselExcessIntersection} that if $f:X\to Y$ is a regular embedding of smooth schemes, the deformation to the normal cone provides a commutative diagram
\[
\xymatrix{X\ar[r]^-{\delta_0}\ar[d]_-s & X\times\aone\ar[d] & X\ar[l]_-{\delta_1}\ar[d]^-f \\
N_XY\ar[r]_-{i_0} & D(X,Y) & Y\ar[l]^-{i_1}}
\]
whose squares are Cartesian. Here, $\delta_i$ is the inclusion $X\times\{i\}\to X\times \aone$, $s:X\to N_XY$ is the zero section of the normal bundle and $i_0$, $i_1$ are closed embeddings. Following the arguments of \cite[Lemma 28]{FaselExcessIntersection}, we see that this diagram provides an isomorphism
\[
d(X,Y):H^r_{X}(N_XY,\K_s^{MW})\to H^r_{X}(Y,\K_s^{MW}).
\]
The second step of the proof is to prove that this isomorphism fits into the commutative diagram
\begin{equation}\label{eqn:deformation}
\xymatrix{H^{r}(X,\K_s^{MW},\det N_XY)\ar[r]^-{s_*}\ar@{=}[d] & H^{r+d}_X(N_XY,\K_{s+d}^{MW})\ar[d]^-{d(X,Y)} \\
H^{r}(X,\K_s^{MW},\det N_XY)\ar[r]_-{f_*} & H^{r+d}_X(Y,\K_{s+d}^{MW}). }
\end{equation}
This reduces to \cite[Lemma 2.2]{FaselExcessIntersection}, whose analogue in our context is easily deduced using Theorem \ref{thm:identification1} (in the case of a principal Cartier divisor).

Consider now our starting diagram
\[
\xymatrix{
X' \ar[r]^-{v}\ar[d]_-g & X \ar[d]^-f \\
Y' \ar[r]_-u & Y.
}
\]
Using \cite[Corollaire 12.2.8]{FaselChowWitt} and the usual factorization of $u$ (and $v$), we may assume that $u$ and $v$ are regular embeddings. As the extension of support commutes with pull-backs, it is then sufficient to prove that the following diagram commutes
\[
\xymatrix{H^r(X^\prime,\K_s^{MW},\det N_{X^\prime}Y^\prime)\ar[d]_-{g_*} & H^r(X,\K_s^{MW},\det N_XY)\ar[d]^-{f_*}\ar[l]_-{v^*} \\
H^{r+d}_{X^\prime}(Y^\prime,\K_{s+d}^{MW}) &  H^{r+d}_{X}(Y,\K_{s+d}^{MW})\ar[l]^-{u^*}.  }
\]
Under our assumptions, we get a Cartesian square
\begin{equation}\label{eqn:easier}
\xymatrix{X^\prime\ar[r]^-v\ar[d]_-{s^\prime} & X\ar[d]^-s \\
N_{X^\prime}Y^\prime\ar[r]_-{v^\prime} &N_XY}
\end{equation}
where $s$ and $s^\prime$ are the zero sections and $v^\prime$ is a closed embedding. Using the fact that $d(X,Y)$ commutes with pull-backs, we see from Diagram (\ref{eqn:deformation}) that we are reduced to prove the result for Diagram (\ref{eqn:easier}). Now we can use the deformation to the normal cone to $X^\prime$ in $X$ to understand the pull-back associated to $v$. It follows from \cite[Lemma 5.8]{FaselChowWittRing} that the normal cone to $N_{X^\prime}Y^\prime$ in $N_XY$ is canonically isomorphic to $N_{X^\prime}X\oplus N_{X^\prime}Y^\prime$ and we can use the functoriality of the deformation to the normal cone to reduce to prove the formula for the diagram
\[
\xymatrix{X^\prime\ar[r]^-t\ar[d]_-{s^\prime} & N_{X^\prime}/X\ar[d]^-s \\
N_{X^\prime}Y^\prime\ar[r]_-{t^\prime} & N_{X^\prime}X\oplus N_{X^\prime}Y^\prime}
\]
where all morphisms are zero sections. This case follows from \cite[Corollaire 12.2.8]{FaselChowWitt} as the pull-backs along zero sections are inverse of pull-back along projections for vector bundles.
\end{proof}

\subsubsection*{An excision result}
\begin{lem}
\label{lem:stabilization}
If $X$ is an $n$-dimensional smooth scheme over a perfect field $k$, $\L$ is a line bundle on $X$ and $Y \subset X$ is a closed subscheme of codimension $c$ with open complement $j: U \to X$, then the induced map
\[
j^*: \widetilde{CH}^i(X,\L) \longrightarrow \widetilde{CH}^i(U,j^*\L)
\]
is bijective for $i < c-1$.
\end{lem}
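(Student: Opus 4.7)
The plan is to deduce the statement from the localization long exact sequence in Chow-Witt cohomology attached to the closed immersion $Y \hookrightarrow X$ with open complement $U$. Identifying $\widetilde{CH}^i(X,\L) = H^i(X,\K^{\MW}_i,\L)$ via the Rost-Schmid complex $C^\bullet(X,\K^{\MW}_i,\L)$, this long exact sequence reads
\[
\cdots \longrightarrow H^i_Y(X,\K^{\MW}_i,\L) \longrightarrow \widetilde{CH}^i(X,\L) \stackrel{j^*}{\longrightarrow} \widetilde{CH}^i(U,j^*\L) \longrightarrow H^{i+1}_Y(X,\K^{\MW}_i,\L) \longrightarrow \cdots
\]
and is obtained as the long exact cohomology sequence of the short exact sequence of complexes $0 \to C^\bullet_Y(X,\K^{\MW}_i,\L) \to C^\bullet(X,\K^{\MW}_i,\L) \to C^\bullet(U,\K^{\MW}_i,j^*\L) \to 0$. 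It therefore suffices to prove that $H^p_Y(X,\K^{\MW}_i,\L) = 0$ for both $p=i$ and $p=i+1$ whenever $i < c-1$.

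For this I would appeal to the explicit form of the supported Rost-Schmid complex: in cohomological degree $p$ it is the direct sum, over points $x \in Y$ of codimension $p$ in $X$, of twisted Milnor-Witt $K$-groups of the residue field $\kappa(x)$. Since $Y$ has codimension at least $c$ in $X$, no point of $Y$ has codimension strictly less than $c$ in $X$, so the complex $C^\bullet_Y(X,\K^{\MW}_i,\L)$ is identically zero in degrees $p < c$, and in particular $H^p_Y(X,\K^{\MW}_i,\L) = 0$ for $p < c$. The hypothesis $i < c-1$ gives both $i < c$ and $i+1 < c$, and hence both groups flanking $j^*$ vanish, proving that $j^*$ is bijective.

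The main point of care is writing down the localization sequence in the correct generality, keeping track of the line bundle twist $\L$ and the residue-field orientations that appear in Chow-Witt theory; the existence and exactness of such a sequence is standard and can be quoted from \cite{FaselChowWitt}. I emphasize that no smoothness hypothesis on $Y$ enters the argument: only the codimensions of the points of $Y$ in $X$ are used in the vanishing step, so the Rost-Schmid description applies to an arbitrary closed subscheme of codimension $\geq c$.
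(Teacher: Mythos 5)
Your proposal is correct and takes essentially the same approach as the paper: the paper's proof also hinges on the observation that the supported subcomplex $C_Y(X,G^*) \subset C(X,G^*)$ vanishes in degrees $<c$, so the restriction map of Gersten-type complexes is an isomorphism in degrees $\leq c-1$, and the claimed bijectivity follows; your repackaging via the localization long exact sequence and the vanishing of $H^p_Y$ for $p<c$ is the same argument in slightly different clothing.
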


\begin{proof}
The Chow-Witt groups of $X$ are defined using an explicit Gersten-type complex $C_j(X,G,\mathcal{L})$ \cite[D{\'e}finition 10.2.13]{FaselChowWitt}. If $U$ is an open subscheme, then we have a surjective map of complexes $C_j(X,G,\mathcal{L})\to C_j(U,G,\mathcal{L}|_U)$ whose kernel is the subcomplex $C_j(X,G,\mathcal{L})_Y \subset C(X,G,\mathcal{L})$ of cycles supported on $Y$. If $Y$ is of codimension $c$, then the morphism of complexes $C_j(X,G,\mathcal{L})\to C_j(U,G,\mathcal{L}|_U)$ is thus an isomorphism in degrees $\leq c-1$. The result follows.
\end{proof}

\begin{rem}
Note that the restriction map $\widetilde{CH}^i(X) \to \widetilde{CH}^i(U)$ attached to an open immersion $U \to X$ is not surjective in general.
\end{rem}

\section{Euler classes and Thom classes}
\label{s:eulerthom}
In this section, we do several things.  Our first goal is to define a ``universal" Chow-Witt Euler class.  To this end, Subsection \ref{ss:eulerfunctoriality} establishes functoriality of the Euler class for the pullback studied in the previous section.  In Subsection \ref{ss:chowwittstabilization}, we show how to extend the Chow-Witt groups to the infinite Grassmannian $Gr_n$, which is usually presented as a colimit of smooth schemes; this discussion is very similar to that study of Chow groups of classifying spaces \`a la Totaro and Edidin-Graham \cite{EdidinGraham}.  Then, in Subsection \ref{ss:universalchowwitteuler}, we show how to use the functoriality of Euler classes to define a universal Chow-Witt Euler class on the Grassmannian.  In Subsection \ref{ss:obstructionsandactions}, we recall the definition of the obstruction theoretic Euler class and show that it lives in a group that is canonically isomorphic to the group housing the universal Chow-Witt Euler class.  In particular, it makes sense to compare the Chow-Witt and obstruction theoretic Euler classes. Finally in Subsection \ref{ss:beyondsmooth} we extend some ideas of sheaf cohomology to spaces that are not smooth schemes and use this to identify the Euler class in terms of a suitably reinterpreted Thom class.

\subsection{Functorial properties of Euler classes}
\label{ss:eulerfunctoriality}
If $X$ is a smooth $k$-scheme, suppose $\xi: \mathcal{E} \to X$ is a vector bundle of rank $r$, and $s_0: X \to \mathcal{E}$ is the zero section.  The Chow-Witt Euler class is defined in the introduction via the formula $e_{cw}(\xi) := (\xi^*)^{-1}(s_0)_* \langle 1 \rangle \in \widetilde{CH}^r(X,\operatorname{det}(\xi)^{\vee})$. Observe that since the zero section of a rank $0$ bundle over a smooth scheme $X$ is simply the identity map $id_X$, the Euler class of a rank $0$ bundle over a smooth scheme is simply $\langle 1 \rangle \in \widetilde{CH}^0(X)$.

In general, The Euler class is only shown to be functorial for flat morphisms of schemes in \cite[Th{\'e}or{\`e}me 13.3.1]{FaselChowWitt}. We now prove a more general result using the Base change theorem \ref{thm:basechange}.

\begin{prop}
\label{prop:functorialityofeulerclasses}
If $f: X \to Y$ is any morphism of smooth schemes, and $\xi: \mathcal{E} \to Y$ is a vector bundle on $Y$, then $f^*e_{cw}(\xi) = e_{cw}(f^*\xi)$.
\end{prop}

\begin{proof}
As usual, we factor $f$ as
\[
X \stackrel{\Gamma}{\longrightarrow} X \times Y \stackrel{p_2}{\longrightarrow} Y
\]
where $\Gamma$ is given by the graph of $f$ (in particular a closed immersion) and the second morphism is a projection (in particular flat).  Observe that $e_{cw}(p_2^* \xi) = p_2^* e_{cw}(\xi)$ by \cite[Th{\'e}or{\`e}me 13.3.1]{FaselChowWitt}.  Thus, we have reduced the problem to establishing the assertion for $f$ a regular embedding.

Suppose $f: X \to Y$ is a closed immersion of smooth schemes.  In that case there is a Cartesian square of the form
\[
\xymatrix{
f^* \mathcal{E} \ar[r]^{f'}\ar[d]_{\xi'} &  \mathcal{E} \ar[d]^{\xi} \\
X \ar[r]_f& Y.
}
\]
Let $s_0': X \to f^* \mathcal{E}$ and $s_0: Y \to \mathcal{E}$ be the zero sections of the two vector bundles in question.  Now, the Euler class of $f^* \mathcal{E}$ is given by $(\xi'^*)^{-1} (s_0')_* \langle 1 \rangle$ and it suffices to show that $f^* (\xi^*)^{-1} (s_0)_* \langle 1 \rangle = (\xi'^*)^{-1} (s_0')_* \langle 1 \rangle$.

To this end, note that the corresponding diagram
\[
\xymatrix{
f^* \mathcal{E} \ar[r]^{f'} &  \mathcal{E}  \\
X \ar[u]^{s_0'} \ar[r]_f& Y\ar[u]_{s_0}.
}
\]
is also Cartesian.  Now, all the morphisms in this diagram are regular embeddings.  Therefore, as observed in \cite[Example 6.3.2]{Fulton} the two possible orientations of the diagram give rise to the same excess bundle.  Now, the normal bundle to the zero section of a vector bundle is simply the original vector bundle, and therefore we conclude that the excess bundle is a rank $0$ vector bundle.  Therefore, Theorem \ref{thm:basechange} applied to the above diagram yields the formula $(f')^*(s_0)_* = (s_0')_*f^*$.

Since $\xi f' = f \xi'$ we see that $(f')^* \xi^* = (\xi')^* f^*$.  Since $\xi^*$ and ${\xi'}^*$ are both isomorphisms, we conclude that $f^* (\xi^*)^{-1} = ({\xi'}^*)^{-1}(f')^*$.  Therefore,
\[
f^* (\xi^*)^{-1} (s_0)_* = ({\xi'}^*)^{-1}(f')^* (s_0)_* = ({\xi'}^*)^{-1}(s_0')_*f^*.
\]
Now, using \cite[Propositions 6.8 and 7.2]{FaselChowWittRing} we observe that $\langle 1 \rangle$ is a unit for the Chow-Witt ring and $f^*$ is a ring homomorphism, i.e., $f^* \langle 1 \rangle = \langle 1 \rangle$.
\end{proof}

\subsection{Stabilization of (twisted) Chow-Witt groups}
\label{ss:chowwittstabilization}
Suppose $G$ is a linear algebraic group over a perfect field $k$, $(\rho,V)$ is a faithful, finite dimensional $k$-rational representation of $G$.  For an integer $n > 0$, we can consider the finite dimensional $k$-rational representation $V^{\oplus \dim V + n}$ of $G$.  There is a canonical $G$-equivariant isomorphism ${\mathbb A}(V^{\oplus \dim V + n}) \cong {\mathbb A}(V)^{\times \dim V + n}$ where $G$ acts diagonally on the product.  Let $V_n \subset {\mathbb A}(V)^{\times \dim V + n}$ be the (maximal) open subscheme on which $G$ acts (scheme-theoretically) freely.

Fix a smooth $G$-scheme $X$ and assume that the contracted product scheme $X \times^G V_n$ exists as a smooth scheme; this latter assumption holds if, e.g., $X$ is $G$-quasi-projective.  The inclusion ${\mathbb A}(V)^{\times \dim V + n} \to {\mathbb A}(V)^{\times \dim V + n+1}$ as the first $\dim V + n$ factors gives rise to a $G$-equivariant morphism $V_n \to V_{n+1}$ and therefore to bonding maps
\[
b_n: X \times^G V_n \to X \times^G V_{n+1}.
\]
Set $X_G(\rho) := \colim_n b_n$ and write $BG(\rho)$ for $X_G(\rho)$ if $X = \Spec k$.  We will refer to the spaces $X \times^G V_{n}$ as {\em finite-dimensional approximations to $X_G(\rho)$}.

The inclusion $V_n \hookrightarrow V_{n+1}$ fits into a commutative diagram of the form:
\[
\xymatrix{
V_n \ar[d]_{id \times 0}\ar[r] & V_{n+1} \ar[d] \\
V_n \times {\mathbb A}(V) \ar[r]_{id} & V_n \times {\mathbb A}(V);
}
\]
here, $0$ is the canonical map $\Spec k \to {\mathbb A}(V)$ corresponding to the inclusion of $0$, the right vertical morphism is an open immersion whose complement has codimension tending to $\infty$ as $n \to \infty$.

If $X$ is a smooth $G$-scheme as above there is a diagram of the form
\begin{equation}
\label{eqn:fundamentalfactorization}
\xymatrix{
X \times^G V_n \ar[r]^{b_n}\ar[d] & X \times^G V_{n+1} \ar[d]\\
X \times^G (V_n \times {\mathbb A}(V)) \ar[r]_{id} & X \times^G (V_n \times {\mathbb A}(V)).
}
\end{equation}
Using faithfully flat descent, we see that the projection map $V_n \times {\mathbb A}(V) \to V_n$ makes $X \times^G (V_n \times {\mathbb A}(V)) \to X \times^G V_n$ a vector bundle and that the left vertical map is simply the morphism corresponding to the zero section.  Furthermore, the right vertical map is an open immersion whose codimension tends to $\infty$ as $n \to \infty$.

If $U \subset X$ has complement of codimension $\geq d$, then the induced map $CH^i(X) \to CH^i(U)$ is an isomorphism for $i \leq d-1$ via the localization sequence for Chow groups.  As a consequence, $\lim_n CH^i(X \times^G V_n)$ is well-defined and we set $CH^i(X_G(\rho)) := \lim_n CH^i(X \times^G V_n)$.  In particular, taking $i = 1$, we can define $Pic(X_G(\rho))$.  The following result is a special case of \cite[Definition-Proposition 1 p. 599]{EdidinGraham}.

\begin{lem}
\label{lem:stabilizationofpicardgroups}
The group $Pic(X_G(\rho))$ is independent of the choice of $\rho$.
\end{lem}

Now, fix a representation $\rho$.  Since $Pic(X_G(\rho))$ is well-defined, we can fix a line bundle $\L$ on $X_G(\rho)$ representing any element of $Pic(X_G(\rho))$.  More precisely, such a line bundle can be represented by a sequence of line bundles $\L_n$ on $X \times^G V_n$ together with specified isomorphisms $b_n^* \L_{n+1} \isomt \L_n$.  Then, for any integer $i$, the groups $\widetilde{CH}^i(X \times^G V_n,\L_n)$ are defined.

\begin{thm}
\label{thm:stabilization}
With $\rho, V_n$ and $\L_n$ as described in the preceding paragraphs, for any given integer $i$, the groups $\widetilde{CH}^i(X \times^G V_n,\L_n)$ stabilize, i.e., there exists an integer $N$ such that for every integer $r \geq 0$, the pullback map $\widetilde{CH}^i(X \times^G V_N,\L_N) \to \widetilde{CH}^i(X \times^G V_{N+r},\L_{N+r})$ is an isomorphism.
\end{thm}

\begin{proof}
Consider Diagram \ref{eqn:fundamentalfactorization}.  We can, without loss of generality, assume that the line bundle $\L_{n+1}$ extends from $X \times^G V_{n+1}$ to a line bundle $\L_{n+1}'$ on $X \times^G (V_n \times {\mathbb A}(V))$ in a fashion such that the restriction of this extended bundle to $X \times^G V_n$ coincides with $\L_n$.  Indeed, if $U \subset Y$ is an open immersion of smooth schemes whose complement has codimension $\geq 2$, then the restriction map on categories of line bundles is fully-faithful and essentially surjective and therefore an equivalence of categories.

Combining these observations there is a corresponding commutative diagram of Chow-Witt groups of the form:
\[
\xymatrix{
\widetilde{CH}^i(X \times^G (V_n \times {\mathbb A}(V)),\L_{n+1}') \ar[d]\ar[r] & \widetilde{CH}^i(X \times^G V_{n+1},\L_{n+1}) \\
\widetilde{CH}^i(X \times^G V_n,\L_n) \ar[ur]_{b_n^*} &
}
\]
The vertical map is pullback along the zero-section of a vector bundle and hence by homotopy invariance for twisted Chow-Witt groups is an isomorphism.  The horizontal morphism has complement of codimension that tends to $\infty$ as $n \to \infty$.  For fixed $i$ and $n$ sufficiently large, Lemma \ref{lem:stabilization} implies this map is an isomorphism.  Therefore, the pullback map $b_n^*$ is necessarily an isomorphism as well.
\end{proof}

Because of the Theorem \ref{thm:stabilization}, the following notation makes sense.

\begin{notation}
Set $\widetilde{CH}^i(X_G(\rho),\L) := \lim_n \widetilde{CH}^i(X \times^G V_n,\L_n)$.
\end{notation}

\begin{rem}
The (twisted) Chow-Witt groups of $X_G(\rho)$ can be seen to be independent of $\rho$.  One way to do this is as follows: the space $X_G(\rho) := \colim_n b_n$ has $\aone$-homotopy type independent of $\rho$; when $X = \Spec k$ this independence result is established in \cite[\S 4.2, esp. Remark 4.2.7]{MV}.  In general, this independence statement can be established by the ``Bogomolov double fibration trick"; see, e.g, \cite[p. 5]{Totaro} or \cite[Definition Proposition 1]{EdidinGraham}. If $\rho$ and $\rho'$ are two faithful representations on vector spaces $V$ and $V'$, we can consider the representation $\rho \oplus \rho'$ on $V \oplus V'$.  There are induced maps $X_G(\rho \oplus \rho') \to X_G(\rho)$ (and corresponding maps of finite dimensional approximations) that one can check are $\aone$-weak equivalences.  The induced maps on finite-dimensional approximations induce isomorphisms on (twisted) Chow-Witt groups in a range of dimensions that tends to infinity as the parameter $n$ in the definition of $X_G(\rho)$ tends to infinity.  In the sequel, we will take $G = GL_n$ (or $SL_n$) and $\rho$ to be the standard $n$-dimensional representation and the independence of the choice of representation will be irrelevant for our purposes here.
\end{rem}

\subsection{The universal Chow-Witt Euler class}
\label{ss:universalchowwitteuler}
Apply Theorem \ref{thm:stabilization} in the case $G = GL_n$, $X = \Spec k$ and $(V,\rho)$ the standard $n$-dimensional representation of $GL_n$.  In that case, $V_N$ can be identified as the open subscheme of the affine space associated with $n \times n+N$-dimensional matrices whose closed complement is defined by the condition that matrices have rank $\leq n-1$.  The quotients $V_N/GL_n$ can be identified as finite dimensional Grassmannian varieties $Gr_{n,n+N}$.  In this case, we write $Gr_n := \colim_N Gr_{n,n+N}$ for $BGL_n(\rho)$.  By the discussion of the previous section, for any line bundle $\L$ on $Gr_n$, we can now speak of $\widetilde{CH}^i(Gr_n,\L)$.

Each Grassmannian $Gr_{n,n+N}$ carries a universal vector bundle $\gamma_{n,N}$ whose fiber over a point is precisely the hyperplane corresponding to that point.  There is a commutative diagram (more precisely, a Cartesian square)
\[
\xymatrix{\mathcal{V}_{n,N}\ar[r]\ar[d]_-{\gamma_{n,N}} & \mathcal{V}_{n,N+1}\ar[d]^-{\gamma_{n,N+1}} \\
Gr_{n,n+N}\ar[r] & Gr_{n,n+N+1}}
\]
where the horizontal arrows are closed immersions.  Set $\mathcal{V}_{n} := \colim_N \mathcal{V}_{n,N}$ be the colimit of the above maps and let $\gamma_n:\mathcal{V}_n\to Gr_n$ be the induced morphism.  Consider the bonding map $b_n: Gr_{n,n+N} \to Gr_{n,n+N+1}$.  Because $b_n^*(\gamma_{n,N+1}) = \gamma_{n,N}$, Proposition \ref{prop:functorialityofeulerclasses} guarantees that $b_n^* e_{cw}(\gamma_{n,N}) = e_{cw}(b_n^* \gamma_{n,N+1})$.  Thus, the sequence of elements $e_{cw}(\gamma_{n,N})$ yields a well-defined element of $\lim_N \widetilde{CH}^i(X \times^G V_N,\det \gamma_{n,N})$ that we will call $e_{cw}(\gamma_n)$.  Since by Theorem \ref{thm:stabilization} the pullback maps are isomorphisms for $n$ sufficiently large, the Euler class $e_{cw}(\gamma_n)$ is represented by $e_{cw}(\gamma_{n,N})$ for all $n$ sufficiently large.  We summarize this in the following result.

\begin{cor}
\label{cor:universalchowwitteulerclass}
If $X = Gr_n$ and $\gamma_n: \mathcal{V}_n \to Gr_n$ is the universal rank $n$ vector bundle on $Gr_n$, then there is a unique element $e_{cw}(\gamma_n) \in \widetilde{CH}^n(Gr_n,\det(\gamma_n)^{\vee})$ whose restriction to the group $\widetilde{CH}^n(Gr_{n,n+N},\det(\gamma_{n,N})^{\vee})$ is $e_{cw}(\gamma_{n,N})$ for every integer $N$ sufficiently large.
\end{cor}

\begin{rem}
\label{rem:slcase}
Let $BG$ be the standard simplicial classifying space of the (Nisnevich) sheaf of groups $G$, e.g., as discussed in \cite[\S 4]{MV}.  This construction is functorial, and the homomorphism $\det: GL_n \to \gm{}$ induces a canonical morphism $BGL_n \to B\gm{}$.  By \cite[\S 4 Proposition 1.15]{MV}, $BGL_n$ is a classifying space for vector bundles: if $X$ is a smooth $k$-scheme, then there is a bijection between simplicial homotopy classes of maps with source $X$ and target $BGL_n$ and isomorphism classes of rank $n$ vector bundles on $X$ (this description actually works more generally, and we will use this observation momentarily).  If $X \to BGL_n$ is a simplicial homotopy class corresponding to a vector bundle $\xi$, then the composite map $X \to BGL_n \to B\gm{}$ corresponds to the $\gm{}$-torsor $(\det \xi^{\vee})^{\circ}$.

There is a simplicial homotopy class of maps $Gr_n \to BGL_n$ classifying the universal vector bundle $\gamma_n$ over $Gr_n$.  By \cite[\S 4 Proposition 3.7]{MV}, this map is an $\aone$-weak equivalence.  As a consequence, the composite map $Gr_n \to BGL_n \to B\gm{}$ corresponds to a $\gm{}$-torsor over $Gr_n$.  Since $Gr_n$ is the colimit of $Gr_{n,n+N}$, the composite maps $Gr_{n,n+N} \to BGL_n \to B\gm{}$ correspond to considering the $\gm{}$-torsors $(\det \gamma_{n,N}^\vee)^{\circ}$.  Using this, the composite map $Gr_n \to BGL_{n} \to B\gm{}$ is represented by the colimit of the spaces $(\det \gamma_{n,N}^\vee)^0$ to $BGL_n$ to $BGL_n$.  This can all be seen differently as follows.

The total space of this $\gm{}$-torsor is a model for the classifying space $BSL_n$; this corresponds to taking the standard representation of $GL_n$ and viewing it as a representation of $SL_n$ by restriction.  It is straightforward to check that this model of $BSL_n$ corresponds to the complement of the zero section in the total space of the {\em dual} of the determinant of $\gamma_n$ over $Gr_n$ (the dual comes because the total space of a vector bundle is the spectrum of the symmetric algebra of the dual bundle).  It is straightforward to check that this space is precisely $BSL_n(\rho)$ for $\rho$ the standard $n$-dimensional representation of $SL_n$.

The pullback of $\gamma_n$ to this model of $BSL_n$ therefore is a universal vector bundle with a specified trivialization of the determinant, i.e., an oriented vector bundle.  Abusing notation, also denote by $\gamma_n$ this oriented vector bundle.  The discussion above gives a canonical element $e_{cw}(\gamma_n) \in \widetilde{CH}^n(BSL_n)$ (now there is no twist!).
\end{rem}

\subsection{Obstruction groups and actions}
\label{ss:obstructionsandactions}
The obstruction theoretic Euler class is defined using $k$-invariants arising in a Moore-Postnikov factorization, which we describe in a fashion slightly different from the introduction.  Let $BGL_n$ be the usual simplicial classifying space as described, e.g., in \cite[\S 4]{MV}.  In that case, functoriality of the classifying space construction applied to the inclusion map $GL_{n-1} \to GL_{n}$ sending an invertible $(n-1) \times (n-1)$ matrix $M$ to the block-diagonal matrix $diag(M,1)$ yields a morphism $BGL_{n-1} \to BGL_{n}$.

The space $BGL_n$ is $\aone$-weakly equivalent to the space $Gr_n$ mentioned in the introduction by \cite[\S 4 Proposition 3.7]{MV} (see Remark \ref{rem:slcase} for some context), and there is an $\aone$-fiber sequence of the form
\[
{\mathbb A}^{n} \setminus 0 \longrightarrow BGL_{n-1} \longrightarrow BGL_{n}.
\]
The obstruction groups that arise by means of the Moore-Postnikov factorization of this map have coefficients in higher $\aone$-homotopy sheaves of ${\mathbb A}^{n} \setminus 0$ twisted by an action of $\bpi_1^{\aone}(BGL_{n+1})$.  The space $BGL_n$ is not $\aone$-$1$-connected and our goal here is to identify this action and this group; the proofs of these results appear in \cite{AsokFaselA3minus0}, but are scattered throughout the paper, so for the reader's convenience we reintroduce all the necessary terminology here.

The exact sequence $SL_n \to GL_n \to \gm{}$ of Nisnevich sheaves of groups yields a simplicial fiber sequence $BSL_n \to BGL_n \to B\gm{}$ which, since $B\gm{}$ is $\aone$-local, is also an $\aone$-fiber sequence and by shifting, a fiber sequence of the form $\gm{} \to BSL_n \to BGL_n$.  The map $SL_n \to GL_n$ defines a map $ESL_n \to EGL_n$ that is $SL_n$-equivariant and consequently yields a map $BSL_n \to EGL_n/SL_n$; because $ESL_n$ and $EGL_n$ are simplicially contractible, this map is a simplicial weak equivalence.  Thus, up to simplicial weak equivalence we can take
\[
GL_n/SL_n \longrightarrow EGL_n / {SL_n} \longrightarrow EGL_n / {GL_n}.
\]
as a model for this $\aone$-fiber sequence.

The determinant yields an isomorphism $GL_n/SL_n \cong \gm{}$ that makes the above sequence into a $\gm{}$-torsor.  Consider the splitting $\gm{} \to GL_n$ sending $t$ to $diag(t,1,\ldots,1)$.  The conjugation action of $\gm{}$ on $GL_n$ induced by this splitting yields an action of $\gm{}$ on $SL_n$ by restriction and the maps $EGL_n / SL_n \to EGL_n / GL_n$ are then equivariant for this action of $\gm{}$.  In particular, we can identify the standard action of $\gm{}$ on $EGL_n/SL_n$ coming from its identification as a $\gm{}$-torsor as that induced by the conjugation action just specified.

Morel showed that $BSL_n$ is $\aone$-$1$-connected (see, e.g., \cite[Example 2.5]{AsokFaselSpheres}) and therefore $EGL_n/SL_n$ is $\aone$-$1$-connected as well.  This discussion above shows that $\bpi_1^{\aone}(BGL_n)$ is then isomorphic to $\gm{}$.  On the other hand, since $\gm{}$ is strongly $\aone$-invariant, the map $EGL_n/SL_n \to BGL_n$ an $\aone$-covering space by \cite[Lemma 7.5(1)]{MField}, thus by \cite[Theorem 7.8]{MField} it is necessarily a universal $\aone$-covering space.  Thus, we conclude that the action of $\gm{}$ on $EGL_n/SL_n$ by conjugation (described above) is a model for the action of $\gm{} = \bpi_1^{\aone}(BGL_n)$ on the $\aone$-universal cover of $BGL_n$ by ``deck transformations."

Next, we use the above results to obtain a description of the action of $\gm{} = \bpi_1^{\aone}(BGL_n)$ on $\bpi_i^{\aone}({\mathbb A}^n \setminus 0)$ mentioned at the beginning of this section.  To this end, consider the commutative diagram of inclusions
\[
\xymatrix{
SL_{n-1} \ar[r]\ar[d] & SL_n \ar[d] \\
GL_{n-1} \ar[r] & GL_n,
}
\]
where the horizontal maps are those sending a matrix $M$ to the block matrix $diag(M,1)$.  The group $\gm{}$ acts on all of the groups in the diagram by conjugation by $diag(t,1,\ldots,1)$ and with respect to these actions all the morphisms are equivariant.

The induced map of quotients $SL_n/SL_{n-1} \to GL_n/GL_{n-1}$ is an isomorphism that is $\gm{}$-equivariant for the actions of $\gm{}$ induced on the quotients.  Consider the map $GL_n \to {\mathbb A}^n \setminus 0$ given by projection onto the last column.  If we equip ${\mathbb A}^n \setminus 0$ with the action of $\gm{}$ given in coordinates by
\begin{equation}
\label{eqn:gmaction}
t \cdot (x_1,\ldots,x_n) = (tx_1,\ldots,x_n),
\end{equation}
then the projection map factors through an $\aone$-weak equivalence $GL_n/GL_{n-1} \to {\mathbb A}^n \setminus 0$ that is also $\gm{}$-equivariant.

Observe that there is a commutative diagram of fiber sequences
\[
\xymatrix{
{\mathbb A}^n \setminus 0 \ar[r]\ar@{=}[d] & BSL_{n-1} \ar[r]\ar[d] & BSL_{n} \ar[d] \\
{\mathbb A}^n \setminus 0 \ar[r] & BGL_{n-1} \ar[r] & BGL_n
}
\]
The sequence in the top row admits a model via the simplicial fiber sequence
\[
SL_n/SL_{n-1} \longrightarrow BSL_{n-1} \longrightarrow BSL_{n}
\]
and this sequence of maps is $\gm{}$-equivariant with respect to the actions mentioned above.  Using the identifications above, the action of $\bpi_1^{\aone}(BGL_n)$ on the higher $\aone$-homotopy groups of ${\mathbb A}^n \setminus 0$ is induced by the $\gm{}$-action on ${\mathbb A}^n \setminus 0$ described in \ref{eqn:gmaction}.

\begin{prop}
\label{prop:gmaction}
The $\gm{}$-action on $\bpi_{n-1}^{\aone}({\mathbb A}^n \setminus 0) \cong \K^{MW}_n$ coincides with that described in \textup{\S \ref{ss:contractionsandactions}}.
\end{prop}

\begin{proof}
This result is contained in the discussion of \cite[\S 6.2]{AsokFaselA3minus0}.
\end{proof}

As above, write $\gamma_n: \mathscr{V}_n \to Gr_n$ for the universal vector bundle over the Grassmannian.  We can replace $BGL_n$ functorially by a simplicially fibrant model for which we shall write $BGL_n^f$.  In that case, the simplicial homotopy class in $[Gr_n,BGL_n]$ corresponding to $\gamma_n$ is represented by an actual map $\gamma_n: Gr_n \to BGL_n^f$; as noted before, this map is an $\aone$-weak equivalence by \cite[\S 4 Proposition 3.7]{MV}.  We can thus view the obstruction theoretic Euler class as the first non-trivial $k$-invariant attached to the Moore-Postnikov factorization of the map $Gr_{n-1} \to Gr_n$, which fits into the $\aone$-fiber sequence
\[
{\mathbb A}^n \setminus 0 \longrightarrow Gr_{n-1} \longrightarrow Gr_n.
\]

If $X$ is a smooth $k$-scheme, and $\xi: \mathcal{E} \to X$ is a rank $n$ vector bundle on $X$.  There is an induced class $\xi \in [X,BGL_n]_{\aone}$.  The map $BGL_n \to B\gm{}$ defines a class in $[X,B\gm{}]_{\aone}$ that corresponds to the line bundle $\det \xi$.  The group housing the primary obstruction is described in \cite[\S 6.1-6.2]{AsokFaselA3minus0}.  In particular, if $\xi$ has rank $n$, then the primary obstruction defines a class
\[
e_{obs}(\xi) \in H^n(X,\K^{MW}_n(\det \xi)).
\]
We use Theorem \ref{thm:identification1} to identify $\widetilde{CH}^n(X,\det \xi) \cong H^n(X,\K^{MW}_n(\det \xi))$, and under this isomorphism, we can compare the obstruction-theoretic Euler class and the Chow-Witt Euler class.

\begin{ex}
The class $e_{obs}(\xi)$ defined above is functorial with respect to pullbacks.  Recall that the groups $\widetilde{CH}^n(Gr_n,\L)$ are defined with respect to finite dimensional approximations that stabilize \ref{thm:stabilization}.  Pulling back the universal obstruction classes with respect the classifying maps $Gr_{n,n+N} \to BGL_n$, we obtain a sequence of classes $e_{obs}(\gamma_{n,N}) \in \widetilde{CH}^n(Gr_{n,n+N},\det \gamma_{n,N}^{\vee})$ such that $b_N^* e_{obs}(\gamma_{n,N+1}) = e_{obs}(\gamma_{n,N})$.  As a consequence, there is a unique class in $e_{obs}(\gamma_n) \in \widetilde{CH}^n(Gr_n,\det \gamma_n^{\vee})$ that is given by $e_{obs}(\gamma_{n,N})$ in any ``sufficiently large finite dimensional approximation."
\end{ex}

\subsection{Cohomology of Thom spaces and Thom classes}
\label{ss:beyondsmooth}
The goal of this section is to identify the Euler class in terms of Thom isomorphisms.  To this end, and to facilitate geometric arguments, we use some extensions of sheaf cohomology to ``spaces" in the sense of Morel-Voevodsky (i.e., simplicial Nisnevich sheaves).

\subsubsection*{D\'evissage and Thom classes}
Now, recall that the map $(s_0)_*$ is actually the composite of two maps.  Indeed, $\widetilde{CH}^i(X,\mathcal{L})$ is, as mentioned above, defined as the cohomology of a Gersten-type complex $C(X,G,\mathcal{L})$.  If $\mathcal{E}^{\circ}$ is the complement of the zero section of our vector bundle, then there is an exact sequence of complexes of the form
\begin{equation}
\label{eqn:localization}
0 \longrightarrow C(\mathcal{E},G,\mathcal{L})_X \longrightarrow C(\mathcal{E},G,\mathcal{L}) \longrightarrow C(\mathcal{E}^{\circ},G,\mathcal{L}|_{\mathcal{E}}) \longrightarrow 0.
\end{equation}
As mentioned in \cite[Remarque 10.4.8]{FaselChowWitt}, there is a d\'evissage isomorphism of relative degree $r=\mathrm{rank}(\mathcal E)$
\[
C(X,G,(s_0)^*\mathcal{L} \tensor \det \xi) \isomto C(\mathcal{E},G,\mathcal{L})_X,
\]
and the map $(s_0)_*$ on Chow-Witt groups is induced by the composite of the d\'evissage isomorphism and the extension of support homomorphism. Taking $\mathcal L=\xi^*\det \xi^\vee$, the d\'evissage homomorphism yields an isomorphism
\[
C_0(X,G) \isomto C_r(\mathcal{E},G,\xi^*\det \xi^\vee)_X,
\]

\begin{defn}[Thom class]
\label{defn:thomclass}
Given a smooth scheme $X$ and a vector bundle $\xi: \mathcal{E} \to X$, there is a unique class $t_{\xi} \in H^r_X(\mathcal E,G,\xi^* \det \xi^{\vee})$ that corresponds under the d\'evissage isomorphism to the class $\langle 1 \rangle \in \widetilde{CH}^0(X)$; we call this class the {\em Thom class} of $\xi$.
\end{defn}

\begin{rem}
It follows immediately from the definition that we can write $e_{cw}(\xi) = (\xi^*)^{-1}\mathrm{ext}(t_{\xi})$, where
\[
\mathrm{ext}: H^r_X(\mathcal E,G,\xi^* \det \xi^{\vee}) \longrightarrow \widetilde{CH}^r(\mathcal{E},\xi^* \det \xi^{\vee})
\]
is the extension of support homomorphism.
\end{rem}

\begin{ex}
Suppose $k$ is our fixed base field.  In the special case where $X = \Spec k$ we consider a geometric vector bundle $\xi: {\mathbb A}(V) \to \Spec k$, where $V$ is an $r$-dimensional $k$-vector space.  Set $\omega = \Lambda^r V$, which is a $1$-dimensional $k$-vector space.  In that case, we obtain a Thom class $t_{\xi} \in H^r_{\{0\}}({\mathbb A}(V),G,\xi^* \omega)$ as the image of $\langle 1 \rangle \in GW(k)$ under the d\'evissage isomorphism.  The ring structure on Chow-Witt groups gives $H^r_{\{0\}}({\mathbb A}(V),G,\xi^* \omega)$ the structure of a $GW(k)$-module of rank $1$.  We already know that this group is, by the d\'evissage isomorphism, identified with $GW(k)$ and it is actually free of rank $1$.   The Thom class gives a prescribed identification $H^r_{\{0\}}({\mathbb A}(V),G,\xi^* \omega) \cong GW(k)$, i.e., it gives a generator for this free rank $1$ module.
\end{ex}

\begin{prop}[Functoriality of Thom classes]
\label{prop:functorialityofthomclasses}
Fix a field $k$.  Suppose $X$ is a smooth $k$-scheme, $\xi: \mathcal{E} \to X$ is a rank $r$ vector bundle.  If $x: \Spec k \to X$ a $k$-rational point, and $\xi': \mathcal{E}|_x \to \Spec k$ is the induced vector bundle, then $x^* t_{\xi} = t_{\xi'}$.
\end{prop}

\begin{proof}
Let $x'$ be the inclusion of the fiber map $\mathcal{E}|_x \to \mathcal{E}$.  Consider the diagram
\[
\xymatrix{
\widetilde{CH}^0(X) \ar[r]\ar[d]^{x^*} & \widetilde{CH}(\mathcal{E},\xi^* \det \mathcal{E}) \ar[d]^{}\\
\widetilde{CH}^0(\Spec k) \ar[r] & \widetilde{CH}(\mathcal{E}|_x,\xi'^*\det \mathcal{E}|_x).
}
\]
This diagram commutes by functoriality of the d\'evissage morphism in transversal squares.  Moreover, we have $x^*(\langle 1\rangle)=\langle 1\rangle$ since $x^*$ is a ring homomorphism.
\end{proof}

\subsubsection*{Sheaf cohomology of spaces}
If $\mathscr{X}$ is a motivic space, and $\mathbf{A}$ is a strictly $\aone$-invariant sheaf of abelian groups, we can define $H^n(\mathscr{X},\mathbf{A}) = [\mathscr{X},K(\mathbf{A},n)]_{\aone}$.  Alternatively, since $K(\mathbf{A},n)$ is $\aone$-local, this construction can be made in the Nisnevich simplicial homotopy category.

The (pointed) Nisnevich simplicial homotopy category is the left Bousfield localization of the category of (pointed) simplicial Nisnevich sheaves with respect to Nisnevich local weak equivalences, and we will view this category as a model category with respect to the injective local model structure.  Write $\hsnis$ ($\hspnis$) for the associated homotopy category.

In the same vein, consider the category $\mathbf{Ch}^{-}(\Ab_{\Nis})$ of (bounded below) chain complexes of Nisnevich sheaves of abelian groups. We can equip this category with the injective local model structure as well and write $\mathbf{D}^-_{\Nis}$ for the associated homotopy category.

Take a pointed simplicial Nisnevich sheaf $\mathscr{X}$.  Write $\Z(X)$ for the Nisnevich sheaf associated with the presheaf $U \mapsto \Z(X(U))$ (i.e., the free abelian group on the simplicial set $X(U)$).  The base-point determines a morphism $\Z = \Z(\Spec k) \to \Z(\mathscr{X})$ splitting the projection $\Z(\mathscr{X}) \to \Z$.  Write $\tilde{\Z}(X)$ for $\Z(X)/\Z(\ast)$.  We write $C_* \tilde{\Z}(X)$ for the normalized chain complex of the simplicial abelian group $\tilde{\Z}(X)$.  In the opposite direction, we can consider the Eilenberg-MacLane spaces $K(C_*,n) := K(C_*[n])$ associated with a complex $C_*$ of Nisnevich sheaves of abelian groups (truncating the complex to lie in degrees $\geq 0$ if necessary).

The normalized chain complex and Eilenberg-Mac Lane functors pass to a left Quillen adjunction between the associated model categories.  In particular, the adjunction contains functorial bijections for any abelian sheaf $\mathbf{A}$
\[
[\mathscr{X}_+,K(\mathbf{A})]_s \cong Hom_{\mathbf{D}^-_{\Nis}}(C_*\tilde{\Z}(\mathscr{X}),\mathbf{A}),
\]
where we use the subscript $s$ to designate morphisms in the (pointed) Nisnevich simplicial homotopy category.  For this statement, we refer the reader to \cite[\S 2.3 Proposition 3]{DeligneVoevodsky}. Note that these identifications are compatible with the suspension isomorphism, again as discussed in \cite[\S 2.3 p. 364]{DeligneVoevodsky}.

Both the category $\hspnis$ and $\mathbf{Ch}^{-}(\Ab_{\Nis})$ can be $\aone$-localized; the latter is studied in \cite[\S 6.2]{MField}.  In particular, recall that by \cite[Proposition 6.25]{MField} or \cite[\S 2.3 Proposition 4]{DeligneVoevodsky}, the space $K(C_*)$ is $\aone$-local if and only if the complex $C_*$ is $\aone$-local.

Now, if $j: U \to X$ is an open immersion of smooth schemes, then the induced map $C_*\Z(U) \to C_*\Z(X)$ is a cofibration.  It follows from the fact that $C_* \Z (\cdot)$ is a left Quillen functor that $C_*\Z(X/U) = C_*\Z(X)/C_*\Z(U)$ \cite[Proposition 6.4.1]{Hovey}.  In particular, we obtain a distinguished triangle in $\mathbf{D}^-_{\Nis}$.  It follows immediately from this and the adjunction that there is an induced isomorphism
\[
[X/U,K(\mathbf{A},n)]_s \cong \hom_{\mathbf{D}^-_{\Nis}}(C_*\Z(X)/C_*\Z(U),\mathbf{A}[n]).
\]

Now, it follows from the discussion of \cite[\S 2.3 p. 363]{DeligneVoevodsky} that if $X$ is a smooth scheme, then $H^i_{\Nis}(X,\mathbf{A})$ can be computed on the small site of $X$ as well. More precisely, restriction to the small site is the left adjoint of a Quillen adjunction between the ``big" derived category constructed above and the ``small" derived category of Nisnevich sheaves of abelian groups over $X$.  Via this identification, the cohomology on $U$ can also be computed in the small derived category of $X$.  Since the cohomology of an abelian sheaf on $X$ with supports in $Z := X \setminus U$ can be computed in terms of the cone of the restriction map of complexes computing cohomology on $X$ and cohomology on $U$, the next result is an exercise in unwinding definitions.

\begin{prop}
\label{prop:xmodu}
If $\mathbf{A}$ is a strictly $\aone$-invariant sheaf of abelian groups, and $j: U \to X$ is an open immersion of smooth schemes with closed complement $Z$, then there are isomorphisms
\[
[X/U,K(\mathbf{A},n)]_{\aone} \cong H^n_Z(X,\mathbf{A})
\]
functorial in the pair $(Z,X)$.  Moreover, the cofiber sequence $U \to X \to X/U \to \cdots$ yields a long exact sequence in cohomology that, under these identifications, corresponds to the long exact sequence in cohomology with supports.
\end{prop}

\begin{cor}
\label{cor:thomspacesupports}
If $Z \hookrightarrow X$ is a closed immersion of smooth schemes with normal bundle $\nu_{Z/X}$, and $\mathbf{A}$ is a strictly $\aone$-invariant sheaf, then there is a canonical isomorphism
\[
H^n_Z(X,\mathbf{A}) \cong [Th(\nu_{Z/X}),K(\mathbf{A}),n]_{\aone}.
\]
\end{cor}

\begin{proof}
Suppose $\xi: \mathcal{E} \to Y$ is a vector bundle and $\mathcal{E}^{\circ}$ is the complement of the zero section.  Then $Th(\xi)$ is the cone $\mathcal{E}/\mathcal{E}^{\circ}$.  Now, the homotopy purity theorem of \cite[\S 3 Theorem 2.23]{MV} yields a canonical isomorphism in $\hop{k}$ of the form $X/(X-Z) \cong Th(\nu_{Z/X})$.  Then, there are a sequence of isomorphisms:
\[
H^n_Z(X,\mathbf{A}) \cong [X/(X-Z),K(\mathbf{A},n)]_{\aone} \cong [Th(\nu_{Z/X}),K(\mathbf{A},n)]_{\aone} =: H^n(Th(\nu_{Z/X}),\mathbf{A});
\]
The first isomorphism is simply Proposition \ref{prop:xmodu} and the second isomorphism follows from the homotopy purity theorem since $K(\mathbf{A},n)$ is $\aone$-local for any $n$ (which is equivalent to $\mathbf{A}$ being strictly $\aone$-invariant).
\end{proof}

\begin{notation}
\label{notation:thomspaces}
If $\xi: \mathcal{E} \to Y$ is a vector bundle, and $\mathbf{A}$ is a strictly $\aone$-invariant sheaf, we set $H^n(Th(\xi),\mathbf{A}) := [Th(\xi),K(\mathbf{A},n)]_{\aone}$.
\end{notation}

\subsubsection*{Thom classes revisited}
If $\xi: V \to \Spec k$ is a trivial vector bundle of rank $n$, then by \cite[Proposition 2.17 and Corollary 2.18]{MV}, there is a canonical isomorphism ${{\mathbb P}^1}^{\sma n} \cong Th(\xi)$.  Using this identification, together with Notation \ref{notation:thomspaces}, the following result holds.

\begin{lem}
There is a canonical isomorphism $H^n({{\mathbb P}^1}^{\sma n},\K^{MW}_n) \cong \K^{MW}_0(k)$.
\end{lem}

\begin{proof}
Using \cite[Lemma 2.15 and Example 2.20]{MV}, there is an isomorphism ${\pone}^{\sma n} \cong \Sigma^1_s ({\mathbb A}^n \setminus 0)$.  By the suspension isomorphism for homology $H^n({{\mathbb P}^1}^{\sma n},\K^{MW}_n) \cong H^{n-1}({\mathbb A}^n \setminus 0,\K^{MW}_n)$.  The result now follows from \cite[Lemma 4.5]{AsokFaselSpheres} (note that the proof given there replaces $Q_{2n-1}$ by ${\mathbb A}^n \setminus 0$) and \cite[Proposition 2.9]{AsokFaselSpheres}.
\end{proof}

If $X$ is a smooth scheme and $\xi: \mathcal{E} \to X$ is a rank $n$ vector bundle, then given a $k$-rational point $x \in X$, we can restrict $\xi$ to $x$ to obtain a trivial rank $n$ vector bundle over $x$.  Functoriality of the Thom space construction \cite[Lemma 2.1]{VMod2} then defines a map
\[
Th(\xi|_x) \to Th(\xi)
\]
(that is compatible with the purity isomorphism).  A choice of trivialization of $\xi|_x$, i.e., a basis of the $\kappa(x)$-vector space $\xi|_x$ determines an isomorphism $Th(\xi|_x) \cong {{\mathbb P}^1}^{\sma n}$.  In particular, given such a trivialization, there is an induced map
\[
H^n(Th(\xi),\K^{MW}_n) \longrightarrow H^n(Th(\xi|_x),\K^{MW}_n) \isomto H^n({{\mathbb P}^1}^{\sma n}) \isomto \K^{MW}_0(k)
\]
where the middle isomorphism is induced by the choice of trivialization.  For the next statement, recall the definition of the Thom class from Definition \ref{defn:thomclass}.

\begin{lem}
\label{lem:sheafythomclasses}
With respect to the identifications above, the Thom class $t_{\xi}$ of an oriented vector bundle $\xi: \mathcal{E} \to X$ is represented by a unique class in $H^n(Th(\xi),\K^{MW}_n)$ such that for any point $x: \Spec L \to X$, the restriction $x^*t_{\xi}$ is sent to $\langle 1 \rangle \in \K^{MW}_0(L)$ under the isomorphism discussed above.
\end{lem}

\section{A Blakers-Massey theorem in $\aone$-homotopy theory}
\label{s:relativehurewicz}
Fix a field $k$.  Suppose $(\mathscr{X},x)$ and $({\mathscr Y},y)$ are pointed spaces and $f: {\mathscr X} \longrightarrow {\mathscr Y}$ is a morphism of pointed spaces.  The goal of this theorem is to establish a relative Hurewicz theorem comparing the connectivity of the $\aone$-homotopy fiber and $\aone$-homotopy cofiber of the map $f$ under suitable analogs of the classical hypotheses.  This result, which appears as Theorem \ref{thm:relativehurewicz} below, is a slight refinement of F. Morel's relative $\aone$-Hurewicz theorem \cite[Theorem 6.56]{MField} (we have since learned the related results were established by F. Strunk in his thesis \cite[Theorem 2.3.8]{Strunk}).  Subsection \ref{ss:comparison} is devoted to constructing the comparison maps alluded to above and Subsection \ref{ss:connectivity} contains the proof of the main result.

\subsection{Comparison maps}
\label{ss:comparison}
Recall that there exists an endo-functor $Ex_{\aone}$ of $\Spc_k$ and a natural transformation $\theta: Id \to Ex_{\aone}$ such that the induced maps $\mathscr{X} \to Ex_{\aone}(\mathscr{X})$ is an $\aone$-acyclic cofibration and $Ex_{\aone}(\mathscr{X})$ is $\aone$-fibrant (see \cite[\S 2 Definition 3.18, Lemmas 3.20-21]{MV}.  Applying this functor to $f$, we obtain a diagram of the form
\[
\xymatrix{
\mathscr{X} \ar[r]\ar[d]_f & Ex_{\aone}(\mathscr{X}) \ar[d]^{Ex_{\aone}(f)} \\
\mathscr{Y} \ar[r] & Ex_{\aone}(\mathscr{Y}).
}
\]
Now, the morphism $Ex_{\aone}(f)$ is not necessarily an $\aone$-fibration, but by the model category axioms we can functorially factor this morphism as the composite of an $\aone$-acyclic cofibration and an $\aone$-fibration, i.e., there exists a space $\mathscr{Y}'$ and a diagram of the form
\[
\xymatrix{
Ex_{\aone}(\mathscr{X}) \ar[r] \ar[dr]_{Ex_{\aone}(f)} &  \ar[d] \mathscr{Y}' \\
& Ex_{\aone}(\mathscr{Y})
}
\]
where the vertical map is an $\aone$-fibration and the horizontal map is a monomorphism and an $\aone$-weak equivalence.  Note, in particular, that $\mathscr{Y}'$ is also $\aone$-fibrant.  Thus, we can functorially replace $f: \mathscr{X} \to \mathscr{Y}$ by an $\aone$-fibration of $\aone$-fibrant objects without changing the $\aone$-homotopy class of $f$.

Assuming $f$ is an $\aone$-fibration of $\aone$-fibrant objects, we define a space $\mathscr{F}$ as the ordinary fiber of the morphism $f$ over the base-point, i.e., there is a pullback square of the form
\[
\xymatrix{
\mathscr{F} \ar[r] \ar[d] & \ast \ar[d]\\
\mathscr{X} \ar[r]^f & \mathscr{Y}.
}
\]
Because $f$ is an $\aone$-fibration of $\aone$-fibrant objects, it follows that $\mathscr{F}$ is a model for the $\aone$-homotopy fiber of $f$.

Now, if $(\mathscr{Z},z)$ is a pointed space, and $\Delta^1_s$ is the simplicial interval (pointed by $1$), let $C(\mathscr{Z}) := \Delta^1_s \wedge \mathscr{Z}$.  The inclusion $\mathscr{Z} \to \mathscr{Z} \times \Delta^1$ sending $z$ to $(z,0)$ induces a cofibration $\mathscr{Z} \to C(\mathscr{Z})$.  Observe that this construction is functorial in $\mathscr{Z}$ and $C(\mathscr{Z})$ is simplicially weakly equivalent to a point.

Now, consider the map of diagrams
\[
\xymatrix{
C(\mathscr{F}) \ar[d] & \mathscr{F} \ar[l]\ar[r]\ar[d] & \ast \ar[d]\\
C(\mathscr{X})        & \mathscr{X} \ar[l]\ar[r]^f & \mathscr{Y}
}
\]
The pushout of the first row is by definition $\Sigma^1_s \mathscr{F}$, while the pushout of the second row is the model of the (simplicial) homotopy cofiber of the map $f$.  Thus, by functoriality of pushouts (again, as spaces) there is an induced map
\begin{equation}
\label{eqn:comparisonsuspensionA}
\Sigma^1_s \mathscr{F} \longrightarrow C(\mathscr{X}) \cup_{\mathscr{X}} \mathscr{Y}.
\end{equation}
This map corresponds to a map from the simplicial suspension of the $\aone$-homotopy fiber of $f$ to the simplicial homotopy cofiber of $f$.  Therefore, applying the functor $Ex_{\aone}$ yet again, we obtain a map
\begin{equation}
\label{eqn:comparisonsuspensionB}
Ex_{\aone}(\Sigma^1_s \mathscr{F}) \longrightarrow Ex_{\aone}(C(\mathscr{X}) \cup_{\mathscr{X}} \mathscr{Y}).
\end{equation}
Observe that $Ex_{\aone}(C(\mathscr{X}) \cup_{\mathscr{X}} \mathscr{Y})$ is a model for the homotopy cofiber of $f$ computed in the $\aone$-local model structure.

The composite of the canonical map $\Sigma^1_s \mathscr{F} \to Ex_{\aone}(\Sigma^1_s \mathscr{F})$ and the map in the previous paragraph also induces a map
\begin{equation}
\label{eqn:comparisonsuspensionC}
\Sigma^1_s \mathscr{F} \longrightarrow Ex_{\aone}(C(\mathscr{X}) \cup_{\mathscr{X}} \mathscr{Y}).
\end{equation}
We now analyze various adjoints of these maps.

Write ${\mathbf R}\Omega^1_s$ for the (derived) simplicial loops functor.  Under the adjunction of loops and suspension, the map in \ref{eqn:comparisonsuspensionA} corresponds to a map
\begin{equation}
\label{eqn:comparisonloopsA}
\mathscr{F} \longrightarrow {\mathbf R}\Omega^1_s (C(\mathscr{X}) \cup_{\mathscr{X}} \mathscr{Y}).
\end{equation}
Again, applying $Ex_{\aone}$ and observing that $\mathscr{F}$ is already $\aone$-fibrant, we then obtain a map
\begin{equation}
\label{eqn:comparisonloopsB}
\mathscr{F} \longrightarrow Ex_{\aone}{\mathbf R}\Omega^1_s ( C(\mathscr{X}) \cup_{\mathscr{X}} \mathscr{Y}),
\end{equation}
On the other hand, adjunction applied to \ref{eqn:comparisonsuspensionC} yields a map
\begin{equation}
\label{eqn:comparisonloopsC}
\mathscr{F} \longrightarrow {\mathbf R}\Omega^1_s Ex_{\aone}(C(\mathscr{X}) \cup_{\mathscr{X}} \mathscr{Y})
\end{equation}
that compares the $\aone$-homotopy fiber and $\aone$-homotopy cofiber of $f$.

Now, notice that for any simplicially fibrant space $\mathscr{Z}$ the canonical map $\mathscr{Z} \to Ex_{\aone}\mathscr{Z}$ induces a morphism $\Omega^1_s \mathscr{Z} \longrightarrow \Omega^1_s Ex_{\aone} \mathscr{Z}$.  There is then an induced morphism
\[
Ex_{\aone} \Omega^1_s \mathscr{Z} \to Ex_{\aone} \Omega^1_s Ex_{\aone} \mathscr{Z}.
\]
Since $\Omega^1_s Ex_{\aone} \mathscr{Z}$ is already $\aone$-fibrant the map $\Omega^1_s Ex_{\aone} \mathscr{Z} \to Ex_{\aone} \Omega^1_s Ex_{\aone} \mathscr{Z}$ is a simplicial weak equivalence.  It follows from the universal property of $\aone$-localization that there is a simplicial homotopy commutative diagram of comparison maps of the form
\[
\xymatrix{
\mathscr{F} \ar[r] \ar[dr] & Ex_{\aone}{\mathbf R}\Omega^1_s ( C(\mathscr{X}) \cup_{\mathscr{X}} \mathscr{Y}) \ar[d] \\
& {\mathbf R}\Omega^1_s Ex_{\aone}(C(\mathscr{X}) \cup_{\mathscr{X}} \mathscr{Y}).
}
\]

\subsection{Connectivity of fibers and cofibers}
\label{ss:connectivity}
If $(\mathscr{X},x)$ is a pointed space, then by the $i$-th simplicial homotopy sheaf of $\mathscr{X}$ we will mean the Nisnevich sheaf associated with the presheaf on $\Sm_k$ defined by
\[
U \mapsto [S^i_s \wedge U_+, (\mathscr{X},x)]_{\hspnis};
\]
we use the notation $\bpi_i^s(\mathscr{X},x)$ for this sheaf, and for notational convenience we will frequently suppress the base-point from notation.  Similarly, we define $\bpi_i^{\aone}(\mathscr{X},x)$ to be the Nisnevich sheaf associated with the presheaf on $\Sm_k$ defined by
\[
U \mapsto [S^i_s \wedge U_+, {\mathscr{X},x}]_{\hop{k}}.
\]
A space $(\mathscr{X},x)$ is {\em simplicially $n$-connected} (resp. $\aone$-$n$-connected) if $\bpi_i^{s}(\mathscr{X},x)$ (resp. $\bpi_i^{\aone}(\mathscr{X},x)$) is the trivial sheaf for $i \leq n$.  More generally, a morphism $f: \mathscr{X} \to \mathscr{Y}$ is said to be {\em simplicially $n$-connected} if the induced map on simplicial homotopy sheaves is an isomorphism for $i \leq n$ and {\em $\aone$-$n$-connected} if the induced map on $\aone$-homotopy sheaves is an isomorphism for $i \leq n$.  Equivalently, by the long exact sequence in homotopy sheaves associated with a fibration in the corresponding model structure, each of these definitions can be phrased in terms of connectivity of a suitable homotopy fiber.

Suppose now $f: \mathscr{X} \to \mathscr{Y}$ is a pointed map of spaces.  As described in the previous section, we can replace $f$ by an $\aone$-weakly equivalent map that has the property that it is an $\aone$-fibration of $\aone$-fibrant spaces and we can consider the associated comparison map in \ref{eqn:comparisonloopsA}.

\begin{thm}
\label{thm:relativehurewicz}
Suppose $k$ is a perfect field.  Assume $f: \mathscr{X} \to \mathscr{Y}$ is a pointed $\aone$-fibration of $\aone$-fibrant spaces.  Consider the comparison map
\[
\mathscr{F} \longrightarrow {\mathbf R}\Omega^1_s Ex_{\aone} (C(\mathscr{X}) \cup_{\mathscr{X}} \mathscr{Y}).
\]
\begin{itemize}
\item[i)] If $\mathscr{F}$ is $\aone$-$m$-connected ($m \geq 0$) and $\mathscr{X}$ is $\aone$-$1$-connected, then the comparison map is $\aone$-$(m+2)$-connected.
\item[ii)] If $\mathscr{Y}$ is $\aone$-$n$-connected for some $n\geq 2$ and $f$ is $\aone$-$m$-connected for some $m \geq 1$, then the comparison map is $\aone$-$(m+n+1)$-connected.
\end{itemize}
\end{thm}

\begin{proof}
We establish the result under the first set of hypotheses.  First, consider the comparison map
\[
\mathscr{F} \longrightarrow {\mathbf R}\Omega^1_s(C(\mathscr{X}) \cup_{\mathscr{X}} \mathscr{Y})
\]
from \ref{eqn:comparisonloopsA}.  Since $\mathscr{F}$ is $\aone$-fibrant, the hypothesis that $\mathscr{F}$ is $\aone$-$m$-connected is equivalent to the assumption that $\mathscr{F}$ is simplicially $m$-connected, which is also equivalent to the condition that the stalks of $\mathscr{F}$ are $m$-connected simplicial sets.  Likewise, the assumption that $\mathscr{X}$ or $\mathscr{Y}$ has a particular $\aone$-connectivity is equivalent to the assumption that the stalks have the same connectivity.

The functor ${\mathbf R}\Omega^1_s$ is the composite of $\Omega^1_s(\cdot)$ and {\em simplicial} fibrant replacement.  Since taking stalks commutes with formation of colimits, it follows that a stalk of $C(\mathscr{X}) \cup_{\mathscr{X}} \mathscr{Y}$ is canonically isomorphic to the pushout of the stalks of the constituent spaces.  Likewise, it follows from the construction of the simplicial fibrant replacement functor (see, e.g., \cite[\S 2 Theorem 1.66]{MV} and the preceding discussion) that the stalk of the simplicial fibrant replacement of any space is a fibrant replacement (in the model category of simplicial sets) of the stalk of that space.  As a consequence, we conclude that the stalk of ${\mathbf R}\Omega^1_s (C(\mathscr{X}) \cup_{\mathscr{X}} \mathscr{Y})$ is weakly equivalent as a simplicial set to ${\mathbf R}\Omega^1_s$ applied to the stalks $C(\mathscr{X}) \cup_{\mathscr{X}} \mathscr{Y}$.

Now, assume $\mathscr{F}$ is $\aone$-$m$-connected, and $\mathscr{X}$ is $\aone$-$1$-connected.  In that case, the stalks of $f$ satisfy the hypotheses of \cite[Theorem 3.11]{GoerssJardine} and combining the discussion of the previous two paragraphs, we conclude that the comparison map
\[
\mathscr{F} \longrightarrow {\mathbf R}\Omega^1_s (C(\mathscr{X}) \cup_{\mathscr{X}} \mathscr{Y})
\]
is stalkwise $(m+2)$-connected and therefore simplicially $(m+2)$-connected.

Since $\mathscr{F}$ is already fibrant and $\aone$-local, by definition $\bpi_i^s(\mathscr{F}) \cong \bpi_i^{\aone}(\mathscr{F})$.  In particular, the sheaves $\bpi_i^{s}(\mathscr{F})$ are strongly $\aone$-invariant by \cite[Corollary 6.2]{MField}.  Since $m \geq 0$, it follows that $(m+2) \geq 2$ and the discussion of the previous paragraph guarantees that the map
\[
\bpi_i^s(\mathscr{F}) \longrightarrow \bpi_i^s({\mathbf R}\Omega^1_s C(\mathscr{X}) \cup_{\mathscr{X}} \mathscr{Y})
\]
is an isomorphism for $i \leq 2$.  In particular, $\bpi_1^s({\mathbf R}\Omega^1_s C(\mathscr{X}) \cup_{\mathscr{X}} \mathscr{Y})$ is strongly $\aone$-invariant.  In that situation we can apply \cite[Theorem 6.57]{MField} to conclude that the induced map
\[
Ex_{\aone}(\mathscr{F}) \longrightarrow Ex_{\aone}{\mathbf R}\Omega^1_s (C(\mathscr{X}) \cup_{\mathscr{X}} \mathscr{Y})
\]
is simplicially $(m+2)$-connected.

Again using the fact that $\bpi_1^s({\mathbf R}\Omega^1_s C(\mathscr{X}) \cup_{\mathscr{X}} \mathscr{Y})$ is strongly $\aone$-invariant, we conclude that the map
\[
Ex_{\aone}{\mathbf R}\Omega^1_s ( C(\mathscr{X}) \cup_{\mathscr{X}} \mathscr{Y}) \longrightarrow {\mathbf R}\Omega^1_s Ex_{\aone}(C(\mathscr{X}) \cup_{\mathscr{X}} \mathscr{Y})
\]
is a simplicial weak equivalence by \cite[Theorem 6.46]{MField}.  The result then follows by the homotopy commutative diagram of comparison maps at the end of the previous section.

For (ii), we proceed in a completely analogous fashion, except we appeal to the fact that the stalks are simplicially $m+n+1$-connected under these hypotheses, which is a consequence of \cite[Theorem 50]{Mather}.
\end{proof}

\section{Transgression, $k$-invariants and the comparison}
\label{s:transgression}
The goal of this section is to give a nice representative of the $k$-invariant that defines the obstruction theoretic Euler class; we show that the obstruction theoretic Euler class can be described in terms of a ``fundamental class" under ``transgression" (see Lemma \ref{lem:describingkinvariants} and Example \ref{ex:kinvariant}).  This result is the analog of a classical fact relating Moore-Postnikov $k$-invariants and transgressions of cohomology of the fiber (see \cite[Chapter III p. 12]{Thomas} for a general statement about $k$-invariants or \cite[p. 237]{Harper} for the corresponding statement regarding the classical Euler class of an oriented real vector bundle).  In the setting in which we work (simplicial sheaves), it is more or less a question of unwinding definitions; we build on the theory of \cite[VI.5]{GoerssJardine} in the setting of simplicial sets.

Subsection \ref{ss:kinvariants} is devoted to recalling some definitions regarding $k$-invariants in Moore-Postnikov towers in the setting of $\aone$-homotopy theory; the main result is contained in Example \ref{ex:kinvariant} and uses the relative Hurewicz theorem discussed in Section \ref{s:relativehurewicz}.  Subsection \ref{ss:obstructiontheoreticeulerclass} then specializes these results to the case of interest.  Subsection \ref{ss:proofofmaintheorem} then contains the proof of the main result stated in the introduction.  Finally, Subsection \ref{ss:eulercherncompare} contains a refinement of \cite[Proposition 6.3.1]{AsokFaselA3minus0}.

\subsection{On $k$-invariants in Moore-Postnikov towers}
\label{ss:kinvariants}
The Moore-Postnikov tower of a morphism of spaces is constructed in the simplicial homotopy category by sheafifying the classical construction in simplicial homotopy theory \cite[VI.2]{GoerssJardine}.  To perform the same construction in $\aone$-homotopy theory, one applies the construction in the simplicial homotopy category to a fibration of fibrant and $\aone$-local spaces \cite[Appendix B]{MField}.  The description of the $k$-invariants in the Moore-Postnikov factorization is then an appropriately sheafified version of the classical construction.  The first result is an analog of \cite[Lemma VI.5.4]{GoerssJardine} in the context of $\aone$-homotopy theory.

\begin{lem}
\label{lem:describingkinvariants}
Suppose $f: \mathscr{X} \to \mathscr{Y}$ is a morphism of pointed $\aone$-$1$-connected spaces and write $\mathscr{F}$ for the $\aone$-homotopy fiber of $f$.  If $f$ is an $\aone$-$(n-1)$-equivalence for some $n \geq 2$, then for any strictly $\aone$-invariant sheaf $\mathbf{A}$ there are isomorphisms
\[
f^*: H^i(\mathscr{Y},\mathbf{A}) \isomto H^i(\mathscr{X},\mathbf{A}) \text{ if } i < n,
\]
and an exact sequence of the form
\[
0 \longrightarrow H^n(\mathscr{Y},\mathbf{A}) \stackrel{f^*}{\longrightarrow} H^n(\mathscr{X},\mathbf{A}) {\longrightarrow} Hom(\bpi_{n}^{\aone}(\mathscr{F}),\mathbf{A}) \stackrel{\partial}{\longrightarrow} H^{n+1}(\mathscr{Y},\mathbf{A}) \longrightarrow H^{n+1}(\mathscr{X},\mathbf{A}).
\]
Moreover the sequence above is natural in morphisms $f$ satisfying the above hypotheses.
\end{lem}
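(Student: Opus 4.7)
The plan is to mimic the classical argument of \cite[Lemma VI.5.4]{GoerssJardine}: combine the long exact sequence in $\aone$-cohomology attached to the homotopy cofiber sequence $\mathscr{X} \xrightarrow{f} \mathscr{Y} \to \mathscr{C} := \operatorname{hocofib}(f)$ with a computation of $H^i(\mathscr{C}, \mathbf{A})$ in low degrees via the refined relative $\aone$-Hurewicz theorem just proved. Since $\mathbf{A}$ is strictly $\aone$-invariant, the Eilenberg-MacLane spaces $K(\mathbf{A}, i)$ are $\aone$-local, $\aone$-cohomology is represented by $[-, K(\mathbf{A}, i)]_{\aone}$, and $\aone$-homotopy cofiber sequences therefore give rise to long exact sequences in $H^*(-, \mathbf{A})$.

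Under the hypotheses, the $\aone$-homotopy fiber $\mathscr{F}$ is $\aone$-$(n-1)$-connected, and $\bpi_n^{\aone}(\mathscr{F})$ is its first potentially non-trivial $\aone$-homotopy sheaf; this sheaf is strictly $\aone$-invariant by \cite[Chapter 6]{MField}. Applying Theorem \ref{thm:relativehurewicz} with $m = n-1$ shows that the natural comparison map $\Sigma^1_s \mathscr{F} \to \mathscr{C}$ is $\aone$-$(n+1)$-connected; in particular $\mathscr{C}$ and $\Sigma^1_s \mathscr{F}$ are both $\aone$-$n$-connected, and examining the long exact cohomology sequence of the homotopy cofiber of this comparison map (which is $\aone$-$(n+2)$-connected) shows that $H^i(\mathscr{C}, \mathbf{A}) \to H^i(\Sigma^1_s \mathscr{F}, \mathbf{A})$ is an isomorphism for all $i \leq n+1$. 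Combining Morel's $\aone$-Hurewicz theorem with the Freudenthal-type isomorphism $\bpi_{n+1}^{\aone}(\Sigma^1_s \mathscr{F}) \cong \bpi_n^{\aone}(\mathscr{F})$ valid in the relevant stable range, we compute $H^{n+1}(\Sigma^1_s \mathscr{F}, \mathbf{A}) \cong \operatorname{Hom}(\bpi_n^{\aone}(\mathscr{F}), \mathbf{A})$ together with the vanishing $H^i(\Sigma^1_s \mathscr{F}, \mathbf{A}) = 0$ for $i \leq n$.

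Splicing these values of $H^*(\mathscr{C}, \mathbf{A})$ into the long exact sequence of the cofiber sequence $\mathscr{X} \to \mathscr{Y} \to \mathscr{C}$ directly yields the isomorphisms $f^*$ for $i < n$ and the claimed five-term exact sequence starting in degree $n$. Naturality in $f$ is automatic from the functoriality of each construction used (homotopy cofiber, comparison map $\Sigma^1_s \mathscr{F} \to \mathscr{C}$, and Hurewicz map). The subtle point is the identification $H^{n+1}(\mathscr{C}, \mathbf{A}) \cong H^{n+1}(\Sigma^1_s \mathscr{F}, \mathbf{A})$ as a genuine isomorphism and not merely an injection; this is precisely where the $(m+2)$-connectivity strengthening afforded by Theorem \ref{thm:relativehurewicz} is essential, since only $(m+1)$-connectivity of the comparison would suffice for injectivity but not surjectivity in this degree.
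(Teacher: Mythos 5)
Your proof is correct and follows essentially the same route as the paper: pass to the cofiber sequence $\mathscr{X} \to \mathscr{Y} \to \mathscr{C}$, invoke the refined relative $\aone$-Hurewicz theorem to get $(n+1)$-connectivity of $\Sigma^1_s \mathscr{F} \to \mathscr{C}$, and identify the relevant cohomology groups of $\mathscr{C}$ via a Hurewicz-type computation before splicing into the long exact sequence. The only minor variation is organizational: you compare $H^*(\mathscr{C},\mathbf{A})$ with $H^*(\Sigma^1_s\mathscr{F},\mathbf{A})$ by passing to the cofiber of the comparison map and then compute cohomology of $\Sigma^1_s\mathscr{F}$, whereas the paper transfers the $\aone$-homotopy sheaves directly (using that $\Sigma^1_s\mathscr{F}\to\mathscr{C}$ is an isomorphism on $\bpi_{n+1}^{\aone}$, together with the Freudenthal isomorphism $\bpi_{n+1}^{\aone}(\Sigma^1_s\mathscr{F})\cong\bpi_n^{\aone}(\mathscr{F})$) and applies \cite[Theorem 3.30]{ADExcision} to $\mathscr{C}$ itself. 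The ingredients are identical and both yield the stated five-term exact sequence, so this is the same proof.
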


\begin{proof}
If $\mathscr{C}$ is the homotopy cofiber of $f$, then there is a cofiber sequence
\[
\mathscr{X} \longrightarrow \mathscr{Y} \longrightarrow \mathscr{C} \longrightarrow \Sigma^1_s \mathscr{X} \longrightarrow \cdots.
\]
By assumption $\mathscr{F}$ is $\aone$-$(n-1)$-connected for some $n \geq 2$.  By the $\aone$-Freudenthal suspension theorem, $\Sigma^1_s \mathscr{F}$ is at least $\aone$-$n$-connected.  By the relative Hurewicz theorem \ref{thm:relativehurewicz}, we know that $\Sigma^1_s \mathscr{F} \to \mathscr{C}$ is an $\aone$-$(n+1)$-equivalence, so $\mathscr{C}$ is at least $\aone$-$n$-connected.

Since $\mathscr{C}$ is $\aone$-$n$-connected, it follows that $H^i(\mathscr{C},\mathbf{A}) = \hom(\bpi_i^{\aone}(\mathscr{C}),\mathbf{A})$ for $i \leq n$ by, e.g., \cite[Theorem 3.30]{ADExcision}.  The first statement then follows from the long exact sequence in cohomology associated with the above cofiber sequence.  For the second statement, observe that there are canonical isomorphisms
\[
\bpi_n^{\aone}(\mathscr{F}) \isomto \bpi_{n+1}^{\aone}(\Sigma^1_s \mathscr{F}) \isomto \bpi_{n+1}^{\aone}(\mathscr{C})
\]
by Morel's $\aone$-Freudenthal suspension theorem \cite[Theorem 6.61]{MField} and the relative Hurewicz theorem \ref{thm:relativehurewicz}.  Another application of \cite[Theorem 3.30]{ADExcision} then implies that
\[
H^{n+1}(\mathscr{C},\mathbf{A}) \cong \hom(\bpi_{n+1}^{\aone}(\mathscr{C}),\mathbf{A}),
\]
which then yields the identification
\[
H^{n+1}(\mathscr{C},\mathbf{A}) \cong \hom(\bpi_{n}^{\aone}(\mathscr{F}),\mathbf{A})
\]
by the isomorphisms stated above.  The functoriality statement is a consequence of the functoriality of the various construction involved.
\end{proof}

\begin{ex}
\label{ex:kinvariant}
In the notation of Lemma \ref{lem:describingkinvariants}, since $\mathscr{F}$ is $\aone$-$(n-1)$-connected, it follows that if $\mathbf{A}$ is any strictly $\aone$-invariant sheaf, then $H^n(\mathscr{F},\mathbf{A}) = \hom(\bpi_n^{\aone}(\mathscr{F}),\mathbf{A})$, again by \cite[Theorem 3.30]{ADExcision}.  In particular, taking $\mathbf{A} = \bpi_n^{\aone}(\mathscr{F})$, the identity morphism on $\bpi_n^{\aone}(\mathscr{F})$ gives a canonical element $1_{\mathscr{F}} \in H^n(\mathscr{F},\bpi_n^{\aone}(\mathscr{F}))$ that we refer to as the ``fundamental class of the $\aone$-homotopy fiber".  Then, consider the composite $d$ defined as:
\[
\xymatrix{
d:H^n(\mathscr{F},\bpi_n^{\aone}(\mathscr{F})) &  H^{n+1}(\mathscr{C},\bpi_n^{\aone}(\mathscr{F})) \ar[l]^-{\sim}\ar[r]^{\partial} & H^{n+1}(\mathscr{Y},\bpi_n^{\aone}(\mathscr{F})), \\
 & &
}
\]
where the left map is the canonical isomorphism discussed above (and arising from the relative Hurewicz theorem).  The class $d(1_{\mathscr{F}}) \in H^{n+1}(\mathscr{Y},\bpi_n^{\aone}(\mathscr{F}))$ that we refer to as the {\em transgression of the fundamental class of the $\aone$-homotopy fiber}.  The hypotheses of the previous result apply to the Moore-Postnikov factorization of a morphism of $\aone$-$1$-connected spaces.  In that case the element $d(1_{\mathscr{F}})$ is, by definition, the $k$-invariant at the relevant stage of the tower \cite[VI.5.5-6]{GoerssJardine}.
\end{ex}

\subsection{The obstruction theoretic Euler class is transgressive}
\label{ss:obstructiontheoreticeulerclass}
We now specialize the results of the previous subsection to the case of interest to obtain our reinterpretation of the Euler class.  We refer the reader to Subsection \ref{ss:contractionsandactions} for some preliminaries used here.  Consider the universal vector bundle $\gamma_n: \mathscr{V}_n \to Gr_n$.  Write $\mathscr{V}_n^{\circ}$ for the complement of the zero section.

\begin{lem}
The space $\mathscr{V}_n^{\circ}$ is $\aone$-weakly equivalent to $Gr_{n-1}$ in such a way that the cofibration $\mathscr{V}_n^{\circ} \to \mathscr{V}_n$ coincides with the map $Gr_{n-1} \to Gr_n$.
\end{lem}

\begin{proof}
We give an outline of the proof; we leave the reader the task of filling in the details.  We establish this in two steps.  First, observe that if $BGL_n$ is the usual simplicial classifying space of Nisnevich locally trivial $GL_n$-torsors, then there is a simplicial fiber sequence of the form
\[
GL_n/GL_{n-1} \longrightarrow BGL_{n-1} \longrightarrow BGL_n,
\]
where the second morphism is precisely that induced by functoriality of the simplicial classifying space construction applied to the standard inclusion map $GL_{n-1} \to GL_n$ (see, e.g., \cite[\S 2]{AHWII}).  One model for this fiber sequence is as follows: take $EGL_n$ (i.e., the Cech simplicial object attached to $GL_n \to \Spec k$) equipped with its usual diagonal $GL_n$-action.  The standard inclusion $GL_{n-1} \to GL_n$ induces an action of $GL_{n-1}$ on $EGL_n$ and the space $BGL_{n-1}$ then admits a model of the form $EGL_n/GL_{n-1}$.  One checks that this space is also simplicially weakly equivalent to $EGL_{n} \times^{GL_n} GL_n/GL_{n-1}$ and the projection onto the first factor induces the map $BGL_{n-1} \to BGL_n$ with fiber $GL_n/GL_{n-1}$.

Now, we simply observe that up to $\aone$-weak equivalence, we may replace all the spaces by those listed in the statement using repeatedly the fact that Zariski local $\aone$-weak equivalences are $\aone$-weak equivalence \cite[Example 2.3]{MV}. The projection onto the first column map induces an $\aone$-weak equivalence $GL_n/GL_{n-1} \to {\mathbb A}^n \setminus 0$.  The projection map $EGL_n \times^{GL_n} GL_n/GL_{n-1} \to EGL_n \times^{GL_n} {\mathbb A}^n \setminus 0$ is thus also an $\aone$-weak equivalence.  The projection map ${\mathbb A}^n \setminus 0 \to \Spec k$ factors through the inclusion map ${\mathbb A}^n \setminus 0 \hookrightarrow {\mathbb A}^n \setminus 0 \longrightarrow \Spec k$ and thus the map $EGL_n \times^{GL_n} {\mathbb A}^n \setminus 0 \to BGL_{n}$ thus factors as
\[
EGL_n \times^{GL_n} {\mathbb A}^n \setminus 0 \longrightarrow EGL_n \times^{GL_n} {\mathbb A}^n \longrightarrow BGL_n.
\]
The second map is an $\aone$-weak equivalence, while the first map is a cofibration.  Since for any $n \geq 0$, $BGL_n$ is $\aone$-weakly equivalent to $Gr_n$ \cite[\S 4 Proposition 3.7]{MV}, we conclude that $EGL_n \times^{GL_n} {\mathbb A}^n \setminus 0$ is $\aone$-weakly equivalent to $Gr_{n-1}$.  On the other hand, if we replace $EGL_n$ by the $\aone$-contractible Stiefel variety $St_{n}$ the we conclude also that $EGL_n \times^{GL_n} {\mathbb A}^n \setminus 0$ is $\aone$-weakly equivalent to ${\mathscr V}_n^{\circ}$, while $EGL_n \times^{GL_n} {\mathbb A}^n$ is $\aone$-weakly equivalent to $\mathscr{V}_n$.  However, this is precisely what we wanted to show.
\end{proof}

The cofiber of the inclusion $\mathscr{V}_n^{\circ} \to \mathscr{V}_n$ is, by definition, $Th(\gamma_n)$.

As observed in \S 3.3, the space $Gr_n$ is $\aone$-weakly equivalent to $BGL_n$, which is not $\aone$-$1$-connected.  On the other hand, we identified the $\aone$-universal cover of $BGL_n$ with a certain $\gm{}$-torsor over $BGL_n$.  Taking the model $Gr_n$ for $BGL_n$, the $\aone$-universal cover can be identified as in Remark \ref{rem:slcase}: it is the total space of complement of the zero section of the dual of the determinant of the tautological vector bundle over $Gr_n$; we write $\widetilde{Gr}_n$ for this model of the $\aone$-universal cover.  As before, we abuse notation and write $\gamma_n: \mathscr{V}_n \to \widetilde{Gr}_n$ for the universal bundle over $\widetilde{Gr}_n$, which comes equipped with a prescribed trivialization of the determinant.

There is an $\aone$-fiber sequence of the form
\[
{\mathbb A}^n \setminus 0 \longrightarrow BSL_{n-1} \longrightarrow BSL_{n}.
\]
We take as model for this fiber sequence the sequence
\[
{\mathbb A}^n \setminus 0 \longrightarrow \mathscr{V}^{\circ}_n \longrightarrow \mathscr{V}_n.
\]
Note that the homotopy cofiber of the map $\widetilde{Gr}_{n-1} \to \widetilde{Gr}_n$ is, by means of the above identifications, $Th(\gamma_n)$.

Applying Lemma \ref{lem:describingkinvariants} in this situation, gives a canonical class $o_n \in H^n(\widetilde{Gr}_n,\K^{MW}_n)$ as the transgression of the fundamental class in $H^{n-1}({\mathbb A}^n \setminus 0,\K^{MW}_n)$.  The pullback of $o_n$ along an $\aone$-homotopy class of maps $X \to BSL_n$ representing an oriented vector bundle yields the obstruction class $e_{ob}(\xi)$ for oriented vector bundles.  Note that, if $\xi: \mathcal{E} \to X$ is an {\em oriented} vector bundle, then this Euler class coincides with the (twisted) Euler class constructed before by functoriality with respect to pullbacks.

\begin{rem}
The case $n = 2$ is slightly anomalous.  In that case, note that, since $\bpi_2^{\aone}(BSL_2) = \K^{MW}_2$, we have $BSL_2^{(2)} = K(\K^{MW}_2,2)$.  The composite map $BSL_2 \to BSL_2^{(2)} \isomt K(\K^{MW}_2,2)$ defines the universal obstruction class in this case.  The reason for this discrepancy is that $BSL_1 = \ast$.  We have the model $\mathrm{HP}^{\infty}$ as a model for $BSL_2$ by the results of \cite{PaninWalterPontryaginClasses}.  The inclusion $\mathrm{HP}^1 \hookrightarrow \mathrm{HP}^{\infty}$ gives, up to $\aone$-homotopy, a map ${{\mathbb P}^1}^{\sma 2} \hookrightarrow BSL_2$ that factors the map ${{\mathbb P}^1}^{\sma 2} \to Th(\gamma_2)$.
\end{rem}


\subsection{Proof of Theorem \ref{thmintro:comparison}}
\label{ss:proofofmaintheorem}
We begin by studying a diagram that collects all the identifications we have made in the preceding sections.

\begin{prop}
\label{prop:diagramcommutes}
The following diagram commutes.
\[
\label{eqn:maindiagram}
\xymatrix{
H^{n-1}({\mathbb A}^n \setminus 0,\K^{MW}_n) \ar[dr]^{d} & & \\
H^n(Th(\gamma_n),\K^{MW}_n) \ar[u]^{\sim}\ar[r]^{\partial}\ar[d]^{\sim}& H^n(\mathscr{V}_n,\K^{MW}_n) \ar[d]^{\sim} & H^n(\widetilde{Gr}_n,\K^{MW}_n)\ar[l]_{\gamma_n^*} \ar[d]^{\sim}\\
H^n_{\widetilde{Gr}_n}(\mathscr{V}_n,G) \ar[r]^-{\partial} & \widetilde{CH}^n(\mathscr{V}_n)   & \widetilde{CH}^n(\widetilde{Gr}_n)\ar[l]_-{\gamma_n^*} \\
 \widetilde{CH}^0(\widetilde{Gr}_n) \ar[ur]_{(s_0)_*}\ar[u]^{\sim} & &
}
\]
where the upward pointing arrow at the bottom of the diagram is the d\'evissage isomorphism, and the upward pointing arrow in the first row comes from the relative Hurewicz theorem.
\end{prop}

\begin{proof}
The triangle on the bottom commutes by the discussion just prior to Definition \ref{defn:thomclass}.  The triangle on the top commutes by construction; see Example \ref{ex:kinvariant}.  The square on the right commutes because the identifications of Proposition \ref{prop:gerstencomplexescoincide} and Theorem \ref{thm:identification1} are functorial in $X$ by construction.  Finally, the commutativity of the left hand square is an immediate consequence of the definition of the group $H^n(Th(\gamma_n),\K^{MW}_n)$ by Corollary \ref{cor:thomspacesupports} (recall Notation \ref{notation:thomspaces}).
\end{proof}

Finally, we can establish Theorem \ref{thmintro:comparison} from the introduction.

\begin{thm}
\label{thm:comparison}
Under the canonical isomorphism $H^n(\widetilde{Gr}_n,\K^{MW}_n) \cong \widetilde{CH}^n(\widetilde{Gr}_n)$ of \textup{Theorem \ref{thm:identification1}}, the class $e_{ob}(\gamma_n)$ coincides with the class $e_{cw}(\gamma_n)$, up to multiplication by a unit in $GW(k)$.
\end{thm}

\begin{proof}
The diagram in Proposition \ref{prop:diagramcommutes} commutes and in this diagram all the morphisms in the left column are isomorphisms.  Under the Thom isomorphism, the group $H^n_{\widetilde{Gr}_n}(\mathscr{V}_n,G)$ or, equivalently, the group $H^n(Th(\gamma_n),\K^{MW}_n)$ is a free $\K^{MW}_0(k)$-modules of rank $1$ generated by the Thom class $t_{\gamma_n}$.

Now, take a rational point $x \in \widetilde{Gr}_n$.  By Proposition \ref{prop:functorialityofthomclasses}, the Thom class $t_{\gamma_n}$ has the property that $x^*t_{\gamma_n} = t_{\gamma_n|_x}$.  If we fix a trivialization of $\gamma_n|_x$, then there is an induced identification $Th(\gamma_n|_x) \cong {\pone}^{\sma n}$.  Thus, upon choice of a trivialization of $\gamma_n|_x$, restriction to the rational point $x$ yields a map
\[
H^n(Th(\gamma_n),\K^{MW}_n) \longrightarrow H^n({{\mathbb P}^1}^{\sma n},\K^{MW}_n).
\]
A priori, the choice of trivialization affects the identification of $H^n({{\mathbb P}^1}^{\sma n},\K^{MW}_n)$ as a $\K^{MW}_0(k)$-module; indeed it is free of rank $1$ generated by the Thom class, but there is a psychologically preferred choice.

Indeed, the bundle $\gamma_{n}$ over $\widetilde{Gr}_n$ is oriented, i.e., it comes with a preferred trivialization of its determinant (see Remark \ref{rem:slcase}).  For any point $x \in \widetilde{Gr}_n$, the pullback of this orientation yields a preferred orientation of $Th(\gamma_n|_x)$.  In particular, there is an induced isomorphism
\[
H^n(Th(\gamma_n),\K^{MW}_n) \longrightarrow H^n(Th(\gamma_n\vert_x),\K^{MW}_n)
\]
Fix a trivialization of $\gamma_n|_x$ that respects this orientation, and consider the induced map
\[
H^n(Th(\gamma_n),\K^{MW}_n) \longrightarrow H^n({\pone}^{\sma n},\K^{MW}_n).
\]


Next, observe that the isomorphism $H^n(Th(\gamma_n),\K^{MW}_n) \to H^{n-1}({\mathbb A}^n\setminus 0,\K^{MW}_n)$ is, by construction induced by the restriction to a fiber map $H^n(Th(\gamma_n),\K^{MW}_n) \to H^n(Th(\gamma_n|_x),\K^{MW}_n)$ an identification $H^n(Th(\gamma_n|_x),\K^{MW}_n) \cong H^n({\pone}^{\sma n},\K^{MW}_n)$ followed by the inverse of the suspension isomorphism $H^{n-1}({\mathbb A}^n \setminus 0,\K^{MW}_n) \isomt H^n({\pone}^{\sma n},\K^{MW}_n)$.  However, Lemma \ref{lem:sheafythomclasses} guarantees that under this identification the Thom class is sent to $\langle 1 \rangle \in \K^{MW}_0(k)$, i.e., the class of $1 \in \hom(\K^{MW}_n,\K^{MW}_n)$ under the identification $\hom(\K^{MW}_n,\K^{MW}_n) \cong H^n({\pone}^{\sma n},\K^{MW}_n)$.  Since these two classes are bases of a free rank $1$ $GW(k)$-module, they necessarily differ by a unit.
\end{proof}

\begin{rem}
With more work, we expect it is possible to establish the comparison result in the introduction for vector bundles that are not necessarily oriented, i.e., to check that the Euler classes twisted by the dual of the determinant coincide. Describing the $k$-invariant explicitly as the ``transgression" of a fundamental class in this setting is significantly more involved because one has to keep track of $\gm{}$-equivariance; the corresponding result in the setting of simplicial sets is \cite[Lemma VI.5.4]{GoerssJardine}.  We have avoided pursuing this generalization because in all cases we know where one wants to actually compute a twisted Euler class one uses the Chow-Witt definition.
\end{rem}

\subsection{Euler classes in Chow-Witt and Chow theory}
\label{ss:eulercherncompare}
If $X$ is a smooth scheme, then the Gersten complex defining Chow-Witt groups is constructed as a fiber product where one of the terms is the Gersten complex of Milnor K-theory.  In particular, for any line bundle ${\mathcal L}$ on $X$, there is morphism of complexes $C_r(X,G,{\mathcal L}) \to C_r(X,\K^M_r)$.  Taking cohomology, there are induced maps
\[
\widetilde{CH}^r(X,{\mathcal L}) \longrightarrow CH^r(X)
\]
that are functorial with respect to pullbacks.  We now study the image of the Euler class under such a map and thus provide a refinement of \cite[Proposition 6.3.1]{AsokFaselA3minus0}.

\begin{prop}
\label{thm:compatibilitywithchernclasses}
If $X$ is a smooth $k$-scheme, and $\xi: \mathcal{E} \to X$ is a rank $r$ vector bundle over $X$, then under the canonical map $\widetilde{CH}^r(X,\det \xi^{\vee}) \to CH^r(X)$, the class $e_{cw}(\xi)$ is mapped to $c_r(\xi)$.
\end{prop}

\begin{proof}
First, observe that $c_r(\xi)$ can be identified with $(\xi^*)^{-1} (s_0)_* 1 \in CH^r(X)$.  Indeed, since $\xi s_0 = id_X$, it follows that $(\xi s_0)^* = id$ and thus $(\xi^*)^{-1} = (s_0)^*$.  Therefore, \cite[Corollary 6.3]{Fulton} implies that $(\xi^*)^{-1} (s_0)_* 1 = (s_0)^* (s_0)_* 1 = c_r(\xi) \cap 1 = c_r(\xi)$.

Finally, naturality of the homomorphism from Chow-Witt groups to Chow groups guarantees that $\langle 1 \rangle \in \widetilde{CH}^0(X)$ is sent to $1 \in CH^0(X)$ (cf. \cite[Proposition 6.12]{FaselChowWitt}) and combined with the discussion of the previous paragraph yields the result.
\end{proof}

\begin{footnotesize}
\bibliographystyle{alpha}
\bibliography{comparingEulerclasses}
\end{footnotesize}
\Addresses
\end{document}